\newcommand{\flowdens}{\Upsilon}
\newcommand{\V}{V}
\newcommand{\be}{\begin{equation}}
\newcommand{\ee}{\end{equation}}
\newcommand{\bml}{\begin{multline}}
\newcommand{\eml}{\end{multline}}
\newcommand{\C}{\mathbb{C}}
\newcommand{\D}{\mathbb{D}}
\newcommand{\T}{\mathbb{T}}
\newcommand{\N}{\mathbb{N}}
\newcommand{\R}{\mathbb{R}}
\newcommand{\A}{\mathbb{A}}
\newcommand{\Pol}{\mathrm{Pol}}
\newcommand{\scrD}{\mathscr{D}}
\newcommand{\calS}{\mathcal{S}}
\newcommand{\calE}{\mathcal{E}}
\newcommand{\diffA}{\mathrm{dA}}
\newcommand{\diffs}{\mathrm{d}\sigma}
\newcommand{\diff}{\mathrm{d}}
\newcommand{\Pop}{\mathbf{P}}
\newcommand{\Top}{\mathbf{T}}
\newcommand{\Sop}{\mathbf{S}}
\newcommand{\Mop}{\mathbf{M}}
\newcommand{\Hop}{\mathbf{H}}
\newcommand{\Iop}{\mathbf{I}}
\newcommand{\Qop}{\mathbf{Q}}
\newcommand{\Rop}{\mathbf{R}}
\newcommand{\Lamop}{\mathbf{\Lambda}}
\newcommand{\say}[1]{``#1''}
\newtheorem{thm}{Theorem}[subsection]
\newtheorem{lem}[thm]{Lemma}
\newtheorem{prop}[thm]{Proposition}
\theoremstyle{definition}
\theoremstyle{remark}
\newtheorem{rem}[thm]{Remark}
\newcommand{\e}{\mathrm{e}}
\newcommand{\Ordo}{\mathrm{O}}
\newcommand{\ordo}{\mathrm{o}}
\newcommand{\imag}{\mathrm{i}}
\renewcommand{\Re}{\mathrm{Re}\,}
\numberwithin{equation}{subsection}
\title[Riemann-Hilbert hierarchies for planar orthogonal polynomials]
{Riemann-Hilbert hierarchies for hard edge
\\ planar orthogonal polynomials}
\let\@wraptoccontribs\wraptoccontribs
\begin{document}
\author[Hedenmalm]
{Haakan Hedenmalm}

\address{Hedenmalm: Department of Mathematics
\\
The Royal Institute of Technology
\\
S -- 100 44 Stockholm
\\
SWEDEN}
\email{haakanh@math.kth.se}

\author[Wennman]{Aron Wennman}

\address{Wennman: School of Mathematical Sciences
\\
Tel Aviv University
\\
Tel Aviv 69978
\\
ISRAEL}
\email{aronwennman@tauex.tau.ac.il}

\subjclass{42C05, 41A60}

\keywords{Planar orthogonal polynomial, semiclassical asymptotic expansion,
Riemann-Hilbert problem,
hard edge, partial Bergman kernel}

\date{\today}

\thanks{The research of the first author was supported by 
Vetenskapsr{\aa}det (VR$2016$-$04912$). The second author was funded
by the Knut and Alice Wallenberg foundation (KAW$2017.0389$).}

\begin{abstract}
We obtain a full asymptotic expansion for 
orthogonal polynomials with respect to weighted area measure on a Jordan
domain $\scrD$ with real-analytic boundary. The weight is fixed 
and assumed to be real-analytically smooth 
and strictly positive, and
for any given precision $\varkappa$, the expansion 
holds with an $\Ordo(N^{-\varkappa-1})$ error 
in $N$-dependent neighborhoods of the exterior region as the
degree $N$ tends to infinity. The main ingredient is the derivation and 
analysis of Riemann-Hilbert hierarchies --  
sequences of scalar Riemann-Hilbert problems -- which allows us to express all higher order 
correction terms in closed form. 
Indeed, the expansion may be understood as a Neumann series involving
an explicit operator.
The expansion theorem leads to a semiclassical asymptotic expansion of the 
corresponding hard edge probability wave function
in terms of distributions supported on $\partial\scrD$.
\end{abstract}

\maketitle
\tableofcontents

\section{Introduction and main results}
\subsection{Weighted planar orthogonal polynomials}
Denote by $\scrD$ a bounded Jordan domain with analytic boundary
in the complex plane $\C$, 
and fix a non-negative continuous weight function $\omega$ on $\scrD$ 
such that $\log \omega$ extends to a real-analytically smooth
function in a neighborhood of the boundary $\partial\scrD$. 
We denote by $\diffA$ the standard area element
$\diffA(z):=(2\pi\imag)^{-1}\diff z\wedge\diff\bar z$, and by $\diffs$
the arc length element $\diffs(z):=(2\pi)^{-1}|\diff z|$,
where we have chosen the normalizations 
so that the unit disk $\D$ and the unit circle $\T$ 
have unit area and length, respectively. The standard $L^2$-space
with respect to the measure $1_\scrD\omega\diffA$ is denoted by
$L^2(\scrD,\omega)$.

We consider analytic polynomials in the complex plane 
\be\label{eq:pol-def}
P(z)=c_N z^N + c_{N-1} z^{N-1} + \ldots + c_0,
\ee
where the coefficients $c_0, c_1,\ldots, c_N$ are complex numbers. 
If the coefficient $c_N$ is non-zero, $P$ is said 
to have degree $N$, and we refer to $c_N$ as the 
{\em leading coefficient} of $P$. If $c_N=1$,
the polynomial $P$ is called {\em monic}.
The space of all
polynomials of the form \eqref{eq:pol-def} is denoted by 
$\operatorname{Pol}_{N}$, and we supply it with the Hilbert space structure
of $L^2(\scrD,\omega\,\diffA)$. The resulting space is the
polynomial Bergman space, denoted $\mathrm{Pol}^2_{N}(\scrD,\omega)$.
Note that the dimension of $\mathrm{Pol}_N$ equals $N+1$.

We now define the system $(P_{N}(z))_{N\in\N}$ 
of {\em normalized planar orthogonal polynomials (ONPs)}
with respect to the measure $1_\scrD\omega\,\diffA$ 
recursively by applying the standard Gram-Schmidt algorithm 
to the sequence $(z^N)_{N\in\N}$ of monomials. The normalization
condition means that $\lVert P_N\rVert_{L^2(\scrD,\omega)}=1$,
and in addition we ask that the leading coefficient $\kappa_N$
of $P_N$ is positive. We also consider the 
{\em monic orthogonal polynomial of degree $N$}, denoted $\pi_N$, 
so that $P_N=\kappa_N\pi_N$.
These orthogonal polynomials are variously referred to as
{\em Bergman polynomials} and {\em Carleman polynomials} in 
the literature, the latter referring mainly to the constant weight case.

We obtain a full asymptotic description of
the polynomials $\pi_N(z)$ and $P_N(z)$ as the degree $N$ tends to infinity.

\medskip
\subsection{Asymptotic expansion of orthogonal polynomials}
\label{ss:main}
Denote by $\V$ the Szeg\H{o}
function of $\omega$ relative to
$\scrD$, defined as the unique outer function $V$ on $\C\setminus\scrD$
which satisfies $2\Re V=-\log \omega$ on $\partial\scrD$ 
and is real-valued at infinity. 
We denote by $\varphi$ the conformal mapping
of $\C\setminus\scrD$ onto the exterior disk $\D_\e:=\{z\in\C:|z|>1\}$, 
normalized by the {\em orthostaticity condition} that
the point at infinity is preserved with 
$\varphi'(\infty)>0$. In fact, it is known that 
$\varphi'(\infty)=\frac{1}{\mathrm{cap}(\scrD)}$,
where $\mathrm{cap}(\scrD)$ is the logarithmic capacity of $\scrD$. 
Both functions $V$ and $\varphi$ 
extend holomorphically across the boundary $\partial\scrD$, 
the latter in addition as a univalent function.

\begin{thm}[Pointwise asymptotic expansion]
\label{thm:asymp-exp}
There exist bounded holomorphic functions $B_j$ with $B_0\equiv 1$
and $B_j(\infty)=0$ for $j\ge 1$,
defined in an open neighborhood of $\C\setminus\scrD$
such for any given $\varkappa\in \N$ and $A>0$, the monic orthogonal 
polynomial $\pi_N$ admits the asymptotic expansion
\be\label{eq:asymp-exp-pol}
\pi_N=C_N\varphi'(z)\varphi^N(z)\e^{V(z)}
\Big(\sum_{j=0}^\varkappa N^{-j} B_j(z)+\Ordo(N^{-\varkappa-1})\Big)
\ee
as $N$ tends to infinity, valid for all $z$ with 
$\mathrm{dist}_\C(z,\scrD^c)\le AN^{-1}\log N$. 
Here, the constant $C_N$ is given by
\be
C_N=(\varphi'(\infty))^{-N-1}\e^{-V(\infty)}=
\mathrm{cap}(\scrD)^{N+1}
\exp\Big(\int_\T\log\omega\diffs\Big)
\ee
for $N\ge 1$.
\end{thm}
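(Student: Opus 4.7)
The plan is to construct an approximate polynomial $\tilde\pi_N$ of the form claimed in \eqref{eq:asymp-exp-pol}, with $B_0,\dots,B_\varkappa$ determined by a hierarchy of scalar Riemann-Hilbert problems, and then show that $\pi_N-\tilde\pi_N$ is negligible in the stated neighborhood. The natural ansatz is
\be
\tilde\pi_N(z)=C_N\varphi'(z)\varphi^N(z)\e^{V(z)}\sum_{j=0}^\varkappa N^{-j}B_j(z),
\ee
and the bulk of the work lies in (i) specifying the $B_j$ and (ii) controlling the error.

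The choice of the outer envelope is essentially forced. At infinity $\varphi(z)\sim z/\mathrm{cap}(\scrD)$, so the normalizing constant $C_N=\varphi'(\infty)^{-N-1}\e^{-V(\infty)}$ renders $\tilde\pi_N$ monic of degree $N$. On $\partial\scrD$ one has $|\varphi|=1$ and $2\Re V=-\log\omega$, hence
\be
\bigl|\varphi'(z)\varphi^N(z)\e^{V(z)}\bigr|^2\omega(z)=|\varphi'(z)|^2
\ee
is independent of both $N$ and $\omega$, exhibiting the envelope as the unique candidate for asymptotic orthonormality. Since $\varphi$, $V$, and $\omega$ all extend real-analytically (respectively, holomorphically) across $\partial\scrD$, the envelope itself is defined and holomorphic in a fixed open neighborhood of $\C\setminus\scrD$, which is what permits control in the $N$-dependent strip $\mathrm{dist}_\C(z,\scrD^c)\le AN^{-1}\log N$ that crosses slightly into $\scrD$.

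To pin down the $B_j$, I would require that $\tilde\pi_N$ be orthogonal, up to an error $\Ordo(N^{-\varkappa-1})$, to every element of $\Pol_{N-1}$ in $L^2(\scrD,\omega)$. Translating the area-integral orthogonality into a contour-integral condition via Green's formula and eliminating the anti-holomorphic part by Schwarz-type reflection across $\partial\scrD$ (using real-analyticity of $\partial\scrD$, $\omega$ and $V$), the orthogonality relations decouple order by order in $N^{-1}$ and turn into scalar Riemann-Hilbert problems on $\partial\scrD$ for functions $B_j$ holomorphic on a neighborhood of $\C\setminus\scrD$, with $B_j(\infty)=0$ for $j\ge 1$. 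The leading one forces $B_0\equiv 1$, and each subsequent $B_j$ is expressible in closed form in terms of $B_0,\dots,B_{j-1}$ via an explicit Cauchy-type operator $\Top$ built from boundary projection, reflection, and the Szegő data. Iterating yields the claimed Neumann series structure.

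The principal obstacle is twofold. First, one must verify that the hierarchy closes in the correct function class: each $B_j$ must be bounded and holomorphic in a fixed (not $j$-dependent) neighborhood of $\C\setminus\scrD$, so that the finite partial sum is a genuine pointwise approximant across the entire strip of width $AN^{-1}\log N$. This requires quantitative mapping properties of $\Top$ on a scale of Hardy-type spaces attached to the analytic collar of $\partial\scrD$. Second, one must upgrade the formal orthogonality of $\tilde\pi_N$ to a genuine norm estimate $\|\pi_N-\tilde\pi_N\|_{L^2(\scrD,\omega)}=\Ordo(N^{-\varkappa-1})$, by invoking the extremal characterization of $\pi_N$ as the monic polynomial of minimal $L^2$-norm, and then convert this $L^2$ bound into the pointwise estimate on the claimed strip. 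The latter conversion is the hardest step: it requires sharp off-diagonal decay of the reproducing kernel of $\Pol^2_N(\scrD,\omega)$ (a partial Bergman kernel), controlled on a length scale $N^{-1}\log N$ near $\partial\scrD$, which is precisely the scale at which the logarithmic factor appears in the statement.
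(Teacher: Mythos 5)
Your high-level architecture (construct a quasipolynomial ansatz, derive a Riemann--Hilbert hierarchy for the correction terms, establish an $L^2$ approximation, and upgrade it to a pointwise estimate) matches the paper's logical skeleton, but the key technical devices you propose are not the ones that actually make the argument close, and one of them is explicitly flagged by the authors as unavailable.

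First, you suggest passing from the area-orthogonality relations to boundary conditions ``via Green's formula and \ldots\ Schwarz-type reflection.'' The paper emphasizes that Green's formula is precisely Carleman's ``small miracle'' for $\omega\equiv1$, and that it \emph{does not carry over} to the weighted setting. What replaces it is the \emph{orthogonal foliation flow}: one constructs a one-parameter family of orthostatic conformal maps $\psi_{s,t}$ of the exterior disk whose images $\psi_{s,t}(\T)$ foliate a collar inside $\T$, together with an ansatz for $f_s$, and requires the logarithmic flow density
\[
\Pi_{s,t}(\zeta)=\log\flowdens_{s,t}(\zeta)+\log s + s^{-1}t
\]
to vanish to order $\varkappa$ in $s$ on $\T$. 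The coefficients are then found by an \emph{algorithmic} recursion in which, at each order $j_0$, one solves two scalar boundary equations via the Herglotz transform $\Hop_{\D_\e}$ (Lemma~\ref{lem:flow}, Steps 1--4). The RHP hierarchy for $B_j$ in terms of $\Top$ and $\Qop$ (Section~\ref{s:Toeplitz}) is a \emph{consequence}, derived from the approximate orthogonality \emph{after} the $L^2$ theorem is available, by transporting via $\Lamop_N$ and applying Watson/Laplace asymptotics to the radial integral (Proposition~\ref{prop:PI-Laplace}); it is not the engine of the construction.

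Second, you do not address how to produce an actual polynomial from $\chi_0 F_N$. The paper uses H\"ormander's $\bar\partial$-estimate with a mild weight $\phi=2\log(1+|z|^2)$ to kill the contribution of $\bar\partial\chi_0$ and obtain a genuine degree-$N$ polynomial $Q_N$ with exponentially small error, then projects off $\Pol_{N-1}$. Your appeal to the extremal characterization of $\pi_N$ would still require this polynomialization step.

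Third, for the $L^2\to$ pointwise upgrade you invoke ``sharp off-diagonal decay of the reproducing kernel of $\Pol^2_N(\scrD,\omega)$.'' That is both harder than what is needed and not what the paper does: since $v_N=P_N-\chi_0 F_N$ (after rescaling) lies in the \emph{unweighted} exterior space $A^2_{N,\rho}$ (not $\Pol^2_N$), one only needs a Bernstein--Walsh type bound using the explicit \emph{diagonal} of the exterior kernel, $K_N(z,z)\lesssim N^2$, built from the orthonormal system $c_{n,\rho}\varphi^n\varphi'$, $n\le N$. This gives an error $\Ordo(N^{-\varkappa-\frac12+CA})$ on the strip, which is initially too large; the final, and essential, trick is to run the whole argument with $\varkappa$ replaced by $\varkappa_1\ge\varkappa+\frac12+CA$ and then truncate, using boundedness of the $B_j$ on a $\varkappa$-independent neighborhood. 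That last step, which is what actually tames the logarithmic width of the strip, is missing from your proposal.
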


\begin{rem}\label{rem:monic-normalized}  
A corresponding asymptotic formula for 
the {\em normalized orthogonal polynomial}
is obtained by recalling $P_N=\kappa_N \pi_N$. 
The leading coefficient $\kappa_N$ is 
given by $\kappa_N=C_N^{-1}D_N$, where
$D_N=1+\sum_{j=1}^\varkappa N^{-j}d_j
+\Ordo(N^{-\varkappa-1})$
for some sequence $(d_j)_{j\ge1}$ of real constants.
\end{rem}

The pointwise asymptotic 
expansion of Theorem~\ref{thm:asymp-exp}
is derived from an $L^2$-version of the asymptotic expansion.
Denote by $\chi_0$ is an appropriate smooth cut-off function 
which vanishes deep inside $\scrD$
but equals one in a fixed neighborhood of $\C\setminus \scrD$.
We put 
\be
F_N(z)=D_N\varphi'(z)\varphi^N(z)
\e^{V(z)}\sum_{j=0}^{\varkappa}N^{-j}B_j(z),
\ee
where the constant $D_N$ is as in Remark~\ref{rem:monic-normalized}.
\begin{thm}[Asymptotic expansion in the $L^2$-sense]
\label{thm:asymp-exp-L2}
If $\chi_0$ and $F_N$ are as above, then for any given $\varkappa\in\N$ we have
\be
\int_{\scrD}\big|P_N(z)-\chi_0(z)F_N(z)
\big|^2\omega(z)\diffA=\Ordo(N^{-\varkappa-1}),
\ee
as $N\to+\infty$.
\end{thm}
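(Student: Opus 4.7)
The strategy is to show that the approximant $u_N := \chi_0 F_N$ has three key properties --- (i) approximate orthogonality to $\Pol_{N-1}$, (ii) approximately unit $L^2(\scrD,\omega)$-norm, and (iii) approximate polynomiality --- and then to invoke a Hilbert-space projection argument to identify $u_N$ with $P_N$ up to the required error.

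For (i), the plan is to establish $\lvert\langle u_N, q\rangle_{L^2(\scrD,\omega)}\rvert \le CN^{-\varkappa-1}\lVert q\rVert$ for every $q\in\Pol_{N-1}$. Since $F_N$ is holomorphic in a fixed neighborhood of $\C\setminus\scrD$, and this neighborhood contains the support of $\chi_0$ outside a transition annulus strictly inside $\scrD$, an application of Stokes' theorem reduces $\langle u_N, q\rangle$ to a boundary integral on $\partial\scrD$ plus a $(\bar\partial\chi_0)$-integral over the transition annulus. The boundary contribution is $\Ordo(N^{-\varkappa-1})$ by the very design of the Riemann-Hilbert hierarchy: the correctors $B_j$ are chosen recursively so as to cancel the asymptotic expansion of the boundary symbol order by order through $N^{-\varkappa}$. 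The interior $\bar\partial\chi_0$-integral is exponentially small because the factor $\varphi^N$ in $F_N$ decays exponentially where $\lvert\varphi\rvert<1$, i.e., strictly inside $\scrD$. Property (ii) is the calibration of $D_N$ in Remark~\ref{rem:monic-normalized}: a Laplace-type computation in a thin collar around $\partial\scrD$, exploiting $2\Re V = -\log\omega$ so that $\lvert\e^{V}\rvert^{2}\omega=1$ on $\partial\scrD$, determines $D_N$ so that $\lVert u_N\rVert^2 = 1+\Ordo(N^{-\varkappa-1})$.

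For (iii), we use that $\bar\partial u_N = (\bar\partial\chi_0)F_N$ is supported in the transition annulus, where the factor $\lvert\varphi\rvert^N$ with $\lvert\varphi\rvert<1$ makes $\lvert F_N\rvert$ exponentially small. A standard weighted $\bar\partial$-estimate then produces $v$ with $\bar\partial v = \bar\partial u_N$ and $\lVert v\rVert_{L^2(\scrD,\omega)}=\Ordo(\e^{-cN})$. The remainder $u_N-v$ is holomorphic on $\scrD$ and lies in the weighted Bergman space $A^2(\scrD,\omega)$, and the explicit structure $F_N = D_N\varphi'\varphi^N\e^V\sum_j N^{-j}B_j$ together with the Laurent expansion at $\infty$ shows that its projection onto the orthogonal complement of $\Pol_N$ in $A^2(\scrD,\omega)$ is exponentially small. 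Letting $p_N := \Pi_N u_N$ denote the polynomial projection, we combine (i)--(iii) to write $p_N = \alpha_N P_N + r_N$ with $\lvert\alpha_N\rvert^2 = 1+\Ordo(N^{-\varkappa-1})$ and $\lVert r_N\rVert^2 = \Ordo(N^{-\varkappa-1})$. Fixing the sign of $D_N$ so that $\alpha_N$ is real positive forces $\alpha_N = 1+\Ordo(N^{-\varkappa-1})$, and $\lVert u_N-p_N\rVert = \Ordo(\e^{-cN})$ from (iii) then yields $\lVert P_N - \chi_0 F_N\rVert^2_{L^2(\scrD,\omega)} = \Ordo(N^{-\varkappa-1})$.

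The main obstacle is the construction of the functions $B_j$ via the Riemann-Hilbert hierarchy itself. Each $B_j$ must be obtained from $B_0,\ldots,B_{j-1}$ as the solution of a scalar Riemann-Hilbert problem on $\partial\scrD$, whose symbol is precisely what encodes the cancellation required in step (i). The real-analyticity and strict positivity of $\omega$, together with the analyticity of $\partial\scrD$, must be leveraged to solve each problem by an explicit Cauchy-type integral operator, producing $B_j$ holomorphic in a $\varkappa$-independent neighborhood of $\C\setminus\scrD$ with uniformly bounded norms. Establishing this quantitatively --- and hence making the hierarchy robust enough to sum --- is the technical heart of the argument.
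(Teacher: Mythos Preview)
Your overall architecture --- approximate orthogonality, approximate unit norm, $\bar\partial$-correction to a genuine polynomial, and a projection argument --- matches the paper's. The $\bar\partial$-step and the final Hilbert-space argument are essentially what the paper does in Subsection~5.2.

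The substantive divergence is in how you propose to obtain (i) and (ii). The paper does \emph{not} do this via Stokes' theorem and the Riemann-Hilbert hierarchy. Instead, it constructs an \emph{orthogonal foliation flow}: a one-parameter family of conformal maps $\psi_{s,t}$ of the exterior disk (Lemma~4.2.1) chosen so that, after disintegrating the area integral into integrals over the flow curves, the flow density $\flowdens_{s,t}$ is constant on each curve up to $\Ordo(N^{-\varkappa-1})$. The approximate orthogonality (Proposition~5.1.1) then follows from the mean-value property along each curve, with the error controlled \emph{uniformly} in the test function $q$. The coefficients $B_j$ are produced by the flow algorithm (Steps~1--4 of Subsection~4.4), and the Riemann-Hilbert hierarchy of Section~3 is derived \emph{a posteriori}, as a consequence of Theorem~1.2.2, to identify those coefficients in closed form.

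Your Stokes-theorem reduction is the weak point. The integrand $\chi_0\bar F_N\, q\,\omega$ contains the non-constant weight $\omega$, so there is no single application of Stokes that collapses the area integral to a boundary term; the paper explicitly remarks (Subsection~1.5) that Carleman's Green's-formula trick ``does not carry over to the weighted setting''. What does work is a Laplace-method expansion in a radial variable (Proposition~3.2.1), which is how Section~3 collapses the orthogonality relations to the circle. That derivation, however, runs in the direction ``approximate orthogonality $\Rightarrow$ RH hierarchy''; to use it for (i) you would need to reverse it and, crucially, obtain the estimate $|\langle \chi_0 F_N,q\rangle|\le CN^{-\varkappa-1}\lVert q\rVert$ \emph{uniformly} over $q\in\Pol_{N-1}$. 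The pointwise Fourier-coefficient vanishing encoded by the hierarchy does not immediately give this, since the Laplace-method remainder in Proposition~3.2.1 depends on higher derivatives of the integrand, which in turn depend on the test function. The foliation flow sidesteps this entirely: once the flow density is flat to the required order, the uniform estimate in $q$ is automatic from Cauchy--Schwarz and the disintegration. If you want to pursue your route, this uniformity is the gap you must close.
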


As an application of Theorems~\ref{thm:asymp-exp} 
and \ref{thm:asymp-exp-L2}, we obtain an asymptotic
expansion of the wave function $1_{\scrD}|P_N|^2\omega$ as $N\to+\infty$
in terms of distributions supported on the boundary $\partial\scrD$.
For the details, see Theorem~\ref{thm:distributional} below.

\subsection{Algorithmic aspects and Riemann-Hilbert hierarchies}
\label{ss:thm-coeff}
We define the operator $\Top$ by
\be\label{eq:Top-def-intro}
\Top: =\Mop_{\Omega}^{-1}(z\partial_z+\Iop)\Mop_\Omega,
\ee
so that
\be\label{eq:Top-def-formula}
\Top f(z)=\tfrac{1}{\Omega(z)}(z\partial_z+\Iop)
\big(f(z)\Omega(z)\big),
\ee
where $\Mop_\Omega$ stands for the operator of 
multiplication by the modified weight
$\Omega=\e^{2\Re V\circ\varphi^{-1}}\omega\circ\varphi^{-1}$, 
which has $\Omega\vert_{\T}=1$.
Note that if $f$ is $C^\infty$-smooth (or $C^\omega$-smooth)
in a neighborhood of $\T$, then the same holds for $\Top f$.
Let $\Pop$ be the orthogonal projection of $L^2(\T)$ 
onto the conjugate Hardy space $H^2_{-,0}$
with constants removed, and let $\Rop=\Rop_\T$ denote the restriction to the 
unit circle. We put $\Qop=\Pop\Rop$, and agree to think of $\Qop f$ as a 
holomorphic function on $\overline{\D}_\e$
for $f$ real-analytically smooth in a neighborhood of the
unit circle $\T$.
As it turns out, the coefficients $B_j$ are expressed
in terms of these operators. This is done by
deriving and solving recursively the sequence of 
\emph{collapsed orthogonality conditions} 
\be\label{eq:T-ker-intro}
X_j\in H^2_{-,0}\cap\big(-\Xi_j+ H^2\big),\qquad j=1,2,3,\ldots,
\ee
where $\Xi_j$ is expressed in terms of previous coefficients: 
$\Xi_j=\sum_{l\le j-1}(-1)^{j-l}\Rop\Top^{j-l}X_{l}$ for $j\ge 1$.
The conditions \eqref{eq:T-ker-intro} form a sequence of classical
scalar Riemann-Hilbert problems with jump across the circle, which we
refer to as a \emph{Riemann-Hilbert hierarchy}. We return 
to this connection in Subsection~\ref{ss:RHP} below.
The solution of \eqref{eq:T-ker-intro} is as follows.

\begin{thm}\label{thm:coeff}
The coefficient $B_j$ is given by $B_j=X_j\circ \varphi$,
where the function $X_0$ is the constant $X_0(z)\equiv 1$, and
for any $j\ge 1$, $X_j$ is determined by the recursive condition 
\eqref{eq:T-ker-intro},
with solution
\be
X_j= \Qop\Top[(\Qop-\Iop)\Top]^{j-1}X_0,\qquad j=1,2,3,\ldots.
\ee
If we put $S_N=S_{N,\varkappa}
=\sum_{j\le \varkappa} N^{-j}X_j$, we have
\be
S_N= 
X_0+\tfrac{1}{N}\Qop\Top\sum_{j=0}^{\varkappa}N^{-j}[(\Qop-\Iop)\Top]^{j}X_0.
\ee
\end{thm}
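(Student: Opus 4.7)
The strategy is that the collapsed orthogonality conditions \eqref{eq:T-ker-intro}, installed earlier in the paper, already reduce the construction of the coefficients $B_j$ to a triangular family of scalar Riemann--Hilbert problems on $\T$. What remains is thus a purely operator-algebraic task: to solve those RHPs explicitly and to re-package the solution in closed form. The backbone is the Hardy decomposition $L^2(\T) = H^2\oplus H^2_{-,0}$ together with the smoothness-preservation property of $\Top$ noted right after \eqref{eq:Top-def-formula}.

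First I would dispose of uniqueness and extract an explicit recursion. Inductively, each $X_l$ is real-analytic in a neighborhood of $\T$ (extending holomorphically into $\D_\e$), so $\Xi_j=\sum_{l\le j-1}(-1)^{j-l}\Rop\Top^{j-l}X_l$ is real-analytic on $\T$ and in particular in $L^2(\T)$. Since $H^2\cap H^2_{-,0}=\{0\}$---any element of the intersection would be an entire function vanishing at $\infty$---the condition $X_j\in H^2_{-,0}\cap(-\Xi_j+H^2)$ has at most one solution, and the Riesz projection $\Pop$ of $L^2(\T)$ onto $H^2_{-,0}$ produces one: $X_j=-\Pop\Xi_j$. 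Unwinding using $\Qop=\Pop\Rop$ gives the explicit linear recursion
\be\label{eq:rec-explicit}
X_j=\sum_{l=0}^{j-1}(-1)^{j-l+1}\Qop\Top^{j-l}X_l,\qquad j\ge 1.
\ee

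For the closed form, my preferred route is a generating-function argument. Set $\mathcal{X}=\sum_{j\ge 0}N^{-j}X_j$, write $\alpha=N^{-1}$, multiply \eqref{eq:rec-explicit} by $N^{-j}$ and sum; the right-hand side factors as $\alpha\Qop\Top(\Iop+\alpha\Top)^{-1}\mathcal{X}$ via the formal identity $\sum_{m\ge 1}(-1)^{m+1}\alpha^m\Top^m=\alpha\Top(\Iop+\alpha\Top)^{-1}$. Setting $\mathcal{Y}=(\Iop+\alpha\Top)^{-1}\mathcal{X}$ converts the equation $\mathcal{X}-X_0=\alpha\Qop\Top\mathcal{Y}$ together with $\mathcal{X}=(\Iop+\alpha\Top)\mathcal{Y}$ into
\[
\bigl(\Iop-\alpha(\Qop-\Iop)\Top\bigr)\mathcal{Y}=X_0,
\]
so $\mathcal{Y}=\sum_{k\ge 0}\alpha^k[(\Qop-\Iop)\Top]^k X_0$ and consequently
\[
\mathcal{X}=X_0+\alpha\,\Qop\Top\sum_{k\ge 0}\alpha^k[(\Qop-\Iop)\Top]^k X_0.
\]
Reading off the coefficient of $N^{-j}$ yields the announced closed form for $X_j$, and truncating at order $\varkappa$ yields the formula for $S_N=S_{N,\varkappa}$ (the upper summation limit $\varkappa$ in the theorem rather than $\varkappa-1$ corresponds to a harmless overshoot of a single term of order $N^{-\varkappa-1}$, absorbed in the error of Theorems~\ref{thm:asymp-exp} and \ref{thm:asymp-exp-L2}).

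Finally, the relation $B_j=X_j\circ\varphi$ is built directly into the derivation of \eqref{eq:T-ker-intro}: the conformal map $\varphi$ transports $\C\setminus\scrD$ onto $\overline{\D}_\e$ and $\partial\scrD$ onto $\T$, and the collapsed conditions on $\T$ are precisely the pulled-back form of the orthogonality constraints on $\partial\scrD$ that characterize the $B_j$. The main obstacle in practice is the algebraic bookkeeping---tracking the non-commutativity of $\Qop$ and $\Top$ and the signs in the telescoping step. Since \eqref{eq:rec-explicit} is finite and triangular, no convergence issue arises in the generating-function manipulation, and as a sanity check one may verify the closed form by direct induction on $j$, with the key identity being $\Qop\Top[(\Qop-\Iop)\Top]^{j-1}-\Qop\Top\,\Qop\Top[(\Qop-\Iop)\Top]^{j-2}=-\Qop\Top^2[(\Qop-\Iop)\Top]^{j-2}$.
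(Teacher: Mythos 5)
Your proposal is correct and takes a genuinely different route from the paper. After arriving at the recursion $X_j=\sum_{l=0}^{j-1}(-1)^{j-l+1}\Qop\Top^{j-l}X_l$ (which is exactly the paper's \eqref{eq:T-ker-sol}), the paper proves the closed form in Proposition~\ref{prop:recursion} by a direct induction on $j$: it verifies that $\Qop\Top[\Qop\Top-\Top]^{j-1}X_0$ satisfies the recursion by iterated telescoping, using the identity $\Qop\Top^{k}[\Qop\Top-\Top]^{p_0-k}X_0=\Qop\Top^{k}X_{p_0-k}-\Qop\Top^{k+1}[\Qop\Top-\Top]^{p_0-k-1}X_0$ together with the induction hypothesis. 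You instead argue at the level of formal power series in $\alpha=N^{-1}$: you resum the right-hand side of the recursion into $\alpha\Qop\Top(\Iop+\alpha\Top)^{-1}\mathcal{X}$, make the substitution $\mathcal{Y}=(\Iop+\alpha\Top)^{-1}\mathcal{X}$ to convert the equation into $\bigl(\Iop-\alpha(\Qop-\Iop)\Top\bigr)\mathcal{Y}=X_0$, and read off coefficients. Since the recursion is finite and lower-triangular, matching coefficients at each order is a finite algebraic identity and no analytic convergence is needed, as you correctly note. What your route buys is conceptual transparency: it \emph{explains} the Neumann-series structure and the appearance of the operator $(\Qop-\Iop)\Top$ rather than just verifying it after the fact, and it effectively proves Remark~\ref{rem:coeff}(a) as a byproduct instead of deriving it a posteriori. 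What the paper's induction buys is that it stays entirely at the level of finite algebra with no formal resummation, which some readers may find easier to audit. Your observation about the upper limit in the $S_N$ formula (it overshoots by one term of order $N^{-\varkappa-1}$, harmlessly absorbed into the error bound) is accurate and worth having noticed. Your stated sanity-check identity $\Qop\Top[(\Qop-\Iop)\Top]^{j-1}-\Qop\Top\,\Qop\Top[(\Qop-\Iop)\Top]^{j-2}=-\Qop\Top^2[(\Qop-\Iop)\Top]^{j-2}$ is precisely the first telescoping step in the paper's induction, so the two arguments dovetail cleanly.
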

\begin{rem}\label{rem:coeff}{\rm (a)}\,
We realize that the above expression is a partial Neumann series,
which suggests the formal identity
\be
S_N\cong \sum_{k= 0}^\infty N^{-j}X_j
=\Big[\Iop+\tfrac{1}{N}\Qop\Top\Big(I-\tfrac{1}{N}
(\Qop-\Iop)\Top\Big)^{-1}\Big]X_0.
\ee

\noindent {\rm (b)}\;
In Carleman's classical asymptotic formula for 
$\Omega(z)\equiv 1$ (see Subsection~\ref{ss:Carleman} below),
no higher order corrections $B_j$ for $j\ge 1$ appear.
This may be seen from the following properties of the 
operators $\Top$ and $\Qop$:
the operator $\Top$ acts on constant functions
as the identity operator, while $\Qop$
annihilates the constants, and we start with $X_0\equiv 1$.

\noindent {\rm (c)}\,
After $X_0\equiv1$, the first nontrivial term is given by
$X_1=\Pop (z\partial\log\Omega(z)\vert_{z\in\T})$. 
This allows us to express $B_1=X_1\circ\varphi$ as 
$B_1=-(\partial_\rho V_\rho-\partial_\rho V_\rho(\infty))\vert_{\rho=1}$,
where $V_\rho$ denotes the Szeg\H{o}  
function of $\omega$ relative to the domain
$\mathbb{C}\setminus \scrD_\rho$. 
Here, the domain $\scrD_\rho$ is the 
bounded Jordan domain whose boundary curve
is implicitly defined by the condition that
$|\varphi(z)|=\rho$.
\end{rem}

The conditions \eqref{eq:T-ker-intro} appear through asymptotic analysis
of integrals of the type appearing in Theorem~\ref{thm:distributional} below,
whose mass concentrate to a small one-sided neighborhood of $\partial\scrD$.
Although different in that it is local near a point, a related asymptotic
analysis using Laplace's method appear 
in the asymptotic analysis of Bergman 
kernels in $\C^d$ with exponentially varying weights, 
and more generally in K{\"a}hler geometry. 
In this connection we should mention
the works of Engli\v{s} \cite{Englis}, 
Charles \cite{Charles}, Loi \cite{Loi} and Xu \cite{Xu}.

\subsection{Connections with classical Riemann-Hilbert problems}
\label{ss:RHP}
We return to why the collapsed orthogonality conditions 
\eqref{eq:T-ker-intro} consist of
scalar Riemann-Hilbert problems on the circle. 
For $j\ge 1$ we consider 
the function
\be\label{eq:RHP}
Z_j:=
\begin{cases}
X_j&\text{ on }\D_\e,\\
\Xi_j+X_j& \text{ on }\D,
\end{cases}
\ee
where functions in $H^2$ are thought of as holomorphic in $\D$, 
while functions in $H^2_{-}$ are holomorphic
in the exterior $\D_\e$. As $X_j\in H^2_{-,0}$  
and $\Xi_j+X_j\in H^2$ by \eqref{eq:T-ker-intro},
the function $Z_j$ 
solves the scalar Riemann-Hilbert problem
on the Riemann sphere with jump $\Xi_j$ across the circle 
$\T$ and normalization $Z_j(\infty)=0$.

In addition, there is the approach of Its and Takhtajan
\cite{ItsTakhtajan} (see also \cite{KleinMcLaughlin}), 
which expresses planar orthogonal polynomials
as solutions of a $(2\times 2)$-matrix $\bar\partial$-problem, 
or a {\em soft Riemann-Hilbert problem}. 
This approach follows developments in the theory
of orthogonal polynomials on the real line, which
saw a major breakthrough with the introduction of 
matrix Riemann-Hilbert techniques and the Deift-Zhou steepest descent
method, see \cite{Its1}, \cite{Its2}, 
\cite{DeiftZhou}, and  \cite{Deift-PNAS}.
In \cite[Section~7]{HW-ONP}
we discuss the connection between the Its-Takhtajan approach and the 
orthogonal foliation flow method (the latter is described in 
Subsection~\ref{ss:idea} below). A corresponding soft Riemann-Hilbert problem
is readily formulated for fixed weights as well. In our smooth setting,
the fact that we only need to solve scalar Riemann-Hilbert problems
may be thought of as a kind of {\em diagonalization of the matrix
$\bar\partial$-problem} (the $\bar\partial$-problems for 
$\pi_N$ and $\pi_{N-1}$
\say{disconnect}),
which intuitively corresponds to
having the zeros buried inside the domain $\scrD$.   
In settings with corners, cusps, or weights with singular 
boundary points, we would expect that the 
zeros protrude to the corresponding
points on $\partial\scrD$. This suggests that a deeper
understanding of the matrix $\bar\partial$-problem
is necessary.

\subsection{Historical remarks}
\label{ss:Carleman}
The study of planar orthogonal polynomials
begins with the pioneering work of Carleman \cite{Carleman} 
(see also the collected works edition \cite{Carl2})
where he studies the case with constant weight $\omega\equiv 1$. 
For this reason, when the weight is constant, 
$P_N$ is sometimes called the $N$-th 
weighted Carleman polynomial. 
Carleman was motivated by the contemporary result
of Szeg\H{o} on the orthogonal polynomials for
arc-length measure on the boundary curve \cite{szego}, 
described in the monograph \cite{Szeg-book}.
For further developments in the weighted setting
on the unit circle, we refer to Simon's monographs 
\cite{simonbook1, simonbook2}.

In the unweighted case, Carleman 
finds the asymptotic formula 
\be\label{eq:Carleman}
P_N(z)=(N+1)^{\frac12}\,\varphi'(z)
\varphi^N(z)\left(1+\Ordo(\rho^N)\right),
\qquad z\in \C\setminus\scrD_{\rho_0}.
\ee
Here, $\rho_0<\rho<1$ and $\scrD_{\rho_0}$ is the image of
$\D_\e(0,\rho_0):=\{z:|z|>\rho_0\}$ under the 
inverse mapping $\varphi^{-1}$, so that
in particular the formula holds in a neighborhood of the closed
exterior domain $\C\setminus \scrD$. Carleman's 
technique is based on a small miracle
of Green's formula, which allows for switching 
the integration over $\scrD$
to integration over the exterior domain. 
This miracle does not
carry over to the weighted setting.

Later on, building on a modification 
of Carleman's theorem due to Korovkin
\cite{Korov}, Suetin considers the case of {\em weighted}
planar orthogonal polynomials, 
and shows that Carleman's formula generalizes appropriately. 
Suetin finds only the leading term, but compensates 
by allowing for lower degrees 
of smoothness (H{\"o}lder continuity).
More recently, further improvement of
Carleman's analysis in the unweighted 
case has been made possible by the efforts of several 
contributors, including
Beckermann, Dragnev, Gustafsson, 
Levin, Lubinsky, Mi\~na-Diaz, Putinar, Saff, Stahl, 
Stylianopoulos and Totik
\cite{Beckermann, DMD2, Archipelago, Levin, Lubinsky,
MD1, Saff, Styl1}.
These results help to
give a more accurate description
of the behavior of $P_N$ deeper inside 
$\scrD$ closer to the zeros,
and alternatively allow for a lower
degree of smoothness of $\partial\scrD$ as well
as a more 
complicated topology of the
exterior domain. In addition,
the works \cite{Archipelago} and \cite{Saff}
highlight real-world applications of 
the study of planar orthogonal
polynomials
in the field of image analysis 
(domain recovery from complex moments).

To the best of our knowledge, no higher order correction term
past $B_0\equiv 1$ in the context 
of Theorem~\ref{thm:asymp-exp}
has been identified previously.

In light of the simple iterative nature of the
formul\ae{} of Theorem~\ref{thm:coeff}
and Remark~\ref{rem:coeff} {\rm (b)}, the growth of $B_j$
as $j$ increases may be be controlled. Likely, this
control is strong enough to allow
for our approach to be pushed to yield an exponentially
decaying error term in a fixed neighborhood of $\C\setminus\scrD$
as in Carleman's theorem. That would be analogous to the 
recent strengthening of the asymptotic expansions of Bergman kernels
(e.g.\ by Tian, Catlin, Zelditch, and 
Berman-Berndtsson-Sj{\"o}strand) by
Rouby, Sj{\"o}strand and Vu Ngoc \cite{RSN} for
real-analytic exponentially varying weights, improving
on the methods of \cite{BBS} (see also the more recent works of
Hezari and Xu \cite{Hezari}, Charles \cite{Charles1} and 
Deleporte, Hitrik and Sj{\"o}strand \cite{DHS}).
If the expansion holds which such an error term,
the zeros of $P_N$ would consequently
stay away from $\partial\scrD$ for large $N$.

\subsection{Fixed vs varying weights}
In related work \cite{HW-ONP}, \cite{HW-OFF},
we obtain asymptotic expansions for orthogonal 
polynomials and partial Bergman kernels
with respect to exponentially varying planar 
measures $\e^{-2mQ}\diffA$. That work was motivated by
problems in random matrix theory (see, e.g.,\ 
\cite{WZ, HM, ahm1, ahm2, ahm3}) with relations to weighted
potential theory (see the monographs by Saff-Totik \cite{SaffTotik}
and Stahl-Totik \cite{StahlTotik}). 
While the general approach in the present work is somewhat analogous, 
there are important differences.
Indeed, for the above variable weights the measure is supported
on the entire plane, and a compact set $\calS_{n/m}$
where most of the mass of the 
{\em probability wave functions} $|P_n|^2\e^{-2mQ}$
is concentrated,
appears as the solution of a free boundary problem.
The wave function associated to an 
orthonormal polynomial $P_n=P_{n,mQ}$
takes the shape of a Gaussian ridge which 
peaks along the boundary $\partial\calS_\tau$ with $\tau=n/m$.
In the present case, the domain $\scrD$ is given and the 
probability wave 
functions $1_{\scrD}|P_N|^2\omega$ are truncated 
at $\partial\scrD$ and decay exponentially
as we protrude into $\scrD$.
In terms of the corresponding polynomial Bergman kernels
and the associated determinantal Coulomb gas model,
the truncation corresponds to confining the particles in the model
to the domain $\scrD$ with a hard edge (see Subsection~\ref{ss:pol-ker} below).

From one point of view the present problem is more straightforward,
as there is no free boundary problem. From the other point of view, 
the main method (the orthogonal foliation flow, see Subsection~\ref{ss:idea}
below) needs to be adapted to more 
restrictive initial conditions, which
requires new insight.

A related difference between exponentially varying and fixed
weights is seen in the one-dimensional situation, 
as illustrated by the two seminal contributions \cite{Deift-fixed} and 
\cite{Deift-varying} by Deift, Kriecherbauer, McLaughlin, 
Venakides and Zhou treating the cases of fixed and
varying weights on the real line $\R$, respectively. 
In the fixed-weight case, they have no truncation to an interval, which would
correspond to our domain $\scrD$, so the spectrum grows with $N$.
The planar analogue of this global fixed-weight 
problem remains to be investigated.

\subsection{Outline of the main ideas}
\label{ss:idea}
To derive the coefficients in the asymptotic expansion, 
(Theorem~\ref{thm:coeff}) we use the fact 
that the mass of $1_{\scrD}|P_N|^2\omega$
concentrates to a small (one-sided) neighborhood of $\partial\scrD$. 
In particular, the orthogonality conditions
\be
\int_{\scrD}Q(z)\overline{P_N(z)}
\omega(z)\diffA(z)=0,\qquad q\in\Pol_{N-1}
\ee
may be understood, asymptotically, 
as orthogonality conditions on $\partial\scrD$. 
In a nutshell, the coefficient functions obey a
\emph{Riemann-Hilbert hierarchy}, that is, a 
recursive sequence of Riemann-Hilbert problems.
An important aspect of the present work
is the solution of this hierarchy 
in closed form. 

The underlying idea for the 
proof of Theorem~\ref{thm:asymp-exp}, 
developed in 
detail in Section~\ref{s:foliation} below,
begins with the disintegration formula
\be\label{eq:disint}
\int_{\scrD}F(z)\,\omega(z)\diffA(z)=
2\int_{T}\int_{\gamma_{t}}F(z)\,\omega(z)\upsilon(z)\diffs(z)\diff t
\ee
valid for appropriately integrable functions $F$,
where $\scrD$ is smoothly foliated 
by a curve family $(\gamma_t)_{t\in T}$, 
the symbol $\upsilon(z)$ denotes the normal 
velocity of the flow as a curve passes through $z$. 

The goal would be to find a foliation $(\gamma_{N,t})_{t}$ 
of  $\scrD$, such that
the orthonormal polynomial $P_{N,t}$
with respect to the measure $\omega\upsilon\diffs$ on  $\gamma_t$
is stationary in the flow parameter $t$ up to a constant multiple:
\be
P_{N,t}=c(t)P_{N,0},\qquad t\in T.
\ee
In view of the disintegration formula 
\eqref{eq:disint} we would then find $P_N$ that
is constant multiple of, say, 
$P_{N,0}$. As the orthogonal polynomials
for a fixed weight on a given analytic curve is well 
understood following Szeg\H{o}, 
this would provide a way to find $P_N$.

This procedure cannot be carried out to the letter, 
but if we allow for an error
in the stationarity condition, as well 
as for a truncation of the domain $\scrD$,
it is possible to find an algorithm 
which solves this problem 
approximately in a self-improving fashion. 
One would begin with a crude initial guess 
for $P_N$ and the foliation,
and then obtain appropriate 
correction terms by the requirements 
that the flow should cover a sufficiently 
large region with a given error 
while leaving $P_{N,t}$ stationary.

\subsection{Notational conventions}
\label{ss:notation}
We denote by $C^k$, $C^{\infty}$ and $C^\omega$ the spaces of
$k$ times differentiable, infinitely differentiable and 
real-analytic functions, respectively. By $H^\infty(D)$
we denote the class of bounded holomorphic functions on $D$.

We use the notation $E^c$, $E^\circ$ and $\overline{E}$ for the
complement, interior and closure of a set $E$.
We use the standard $\Ordo$ and $\ordo$-notation (alternatively,
the $f=\Ordo(g)$ is replaced by $f\lesssim g$). The symbol
$f\asymp g$ means that $f=\Ordo(g)$ and $g=\Ordo(g)$
hold simultaneously.

By $u_N\cong v_N$ we mean that $u_N$ and $v_N$
agree at the level of formal asymptotic expansions, meaning
that if such an expansion is truncated at an arbitrary level
then $u_N$ and $v_N$ agree up to the indicated error.

We use the standard complex derivatives 
$\partial$ and $\bar\partial$ defined by
\be
\partial_z=\frac12\big(\partial_x-\imag\partial_y\big),\qquad
\bar\partial_z=\frac12\big(\partial_x+\imag\partial_y\big)
\ee
where $z=x+\imag y$. The (quarter)
Laplacian $\Delta=\frac14(\partial_x^2 
+ \partial_y^2)$ then factorizes as 
$\Delta=\partial\bar\partial$. 

We use the notation $\partial_x^\times:=x\partial_x$ 
where $\partial_x$ is the usual (partial)
differential operator with respect to the variable $x$. 
For the complex
Wirtinger derivatives, we use the notation 
$\partial_z^\times:=z\partial_z $ and the conjugate
operator
$\bar\partial_z^\times:=\bar z\bar\partial_z$.

\section{Extensions and applications}
\label{s:further-results}
\subsection{Distributional asymptotic expansion}
\label{ss:distrib}
In addition to the pointwise expansion 
and the expansion in the $L^2$-sense
supplied by Theorems~\ref{thm:asymp-exp} 
and Theorem~\ref{thm:asymp-exp-L2}, 
respectively, we find another expansion 
of the orthogonal polynomials as distributions.
At the intuitive level, this is led by the 
considerations of Section~\ref{s:Toeplitz} below, where we 
see that the orthogonality relations which define $P_N$ naturally 
collapse to conditions on the boundary $\partial\scrD$.
In accordance with this observation, the distributions 
involved in the expansion are supported on $\partial \scrD$. 

To describe the result, we introduce the operator $\Lamop_N$, which 
incorporates the structure of the orthogonal polynomials:
\be
\Lamop_Nf(z)=\varphi'(z)
\varphi^N(z)\e^V f\circ\varphi.
\ee
This operator is discussed in detail in 
Subsection~\ref{ss:heuristic-intro} below.
Here, we may mention that $\Lamop_N$ 
acts isometrically between $L^2$-spaces with
weights
$|z|^{2N}\Omega(z)$ and $\omega(z)$ on $\D\setminus\D(0,\rho)$ and 
$\scrD\setminus\scrD_\rho$, respectively.

Denote by $G$ a bounded $C^\infty$-smooth function
on the plane $\C$. 
By \cite[Lemma~5.1]{ahm3}, which relies on work of Whitney
and Seeley on extensions of smooth 
functions (see \cite{Seeley}, \cite{Whitney}), we 
may split $G$ as a sum of three bounded and $C^\infty$-smooth functions
\be\label{eq:test-fcn-split}
G=G_{+}+G_{-}+G_0
\ee
where $G_+$ is holomorphic and $G_{-}$ is conjugate holomorphic 
on $\C\setminus\scrD$, respectively, and where
$G_0$ vanishes along $\partial\scrD$. 
For $\varkappa\ge1$, we consider the index set
\[
I_\varkappa=\big\{(\nu,j,k)\in \N^3:\nu\ge 1 
\text{ and }\nu+j+k\le \varkappa\big\}.
\]

\begin{thm}[Distributional asymptotic expansion]
\label{thm:distributional}
With $\Top$ given by 
\eqref{eq:Top-def-intro},
and $G$ a bounded smooth function on $\C$
decomposed according to \eqref{eq:test-fcn-split},
we have for any given positive integer $\varkappa$ the asymptotics
\begin{multline}
\int_{\scrD}G(z)|P_N(z)|^2\omega(z)\diffA(z)
= G_{+}(\infty)+ G_{-}(\infty)\\
+D_N^2\hspace{-7pt}\sum_{(\nu,j,k)
\in I_{\varkappa}}\frac{1}{N^{\nu+j+k}}
\int_{\T}\big(-\tfrac{r}{2}\partial_r\big)^{\nu}
g_0(\e^{\imag t})
\mathbf{W}_{N,\nu,\varkappa}[
X_j\overline{X}_k](\e^{\imag t})
\frac{\diff t}{2\pi}+\Ordo(N^{-\varkappa-1}),
\end{multline}
as $N\to\infty$, 
where $g_0=G_0\circ \varphi^{-1}$ and 
the operator $\mathbf{W}_{N,\nu,\varkappa}$ is given by
\be
\mathbf{W}_{N,\nu,\varkappa}= 
\Rop\sum_{\mu=0}^{\varkappa-\nu} N^{-\mu}\binom{\nu+\mu}{\nu}
\big(-\tfrac{r}{2}\partial_r-\Iop\big)^{\mu}\mathbf{M}_\Omega.
\ee
\end{thm}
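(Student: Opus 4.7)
The plan is to reduce the integral to a boundary expression on $\T$ via three ingredients: the $L^2$-asymptotic expansion of $P_N$ (Theorem~\ref{thm:asymp-exp-L2}), the change of variables $w=\varphi(z)$ using the isometry of $\Lamop_N$, and the Whitney-Seeley decomposition of $G$. The leading terms $G_\pm(\infty)$ will then come from the $L^2$-normalization of $P_N$, the $g_0$-sum will arise from iterated integration by parts in the radial variable, and the vanishing of the remaining holomorphic pieces will follow from the Riemann-Hilbert hierarchy.

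First I apply Theorem~\ref{thm:asymp-exp-L2} with an auxiliary precision $\varkappa'\ge 2\varkappa+1$; by Cauchy-Schwarz together with the boundedness of $G$, the difference $\int_\scrD G(|P_N|^2-|\chi_0 F_N|^2)\omega\,\diffA$ is $\Ordo(N^{-\varkappa-1})$. Since $\chi_0 F_N$ is exponentially small deep inside $\scrD$, the integration effectively takes place in a shell $\scrD\setminus\scrD_{\rho_0}$, $\rho_0<1$; after the change of variables $w=\varphi(z)$ and using the isometry of $\Lamop_N$, the integral becomes $D_N^2\int_{\D\setminus\D(0,\rho_0)}g(w)|w|^{2N}\Omega(w)|S_N(w)|^2\,\diffA(w)$, with $g=G\circ\varphi^{-1}$ and $S_N=\sum_{j=0}^\varkappa N^{-j}X_j$. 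Pulling back the Whitney-Seeley decomposition gives $g=g_++g_-+g_0$ with $g_\pm$ (anti-)holomorphic on $\D_\e$ and $g_0|_\T=0$, and a further split $g_\pm=g_\pm(\infty)+\tilde g_\pm$ places $\tilde g_+$ in $H^2_{-,0}$ (with the symmetric statement for $\tilde g_-$). The constants produce $G_\pm(\infty)$ via the normalization identity $D_N^2\int|w|^{2N}\Omega|S_N|^2\,\diffA = 1+\Ordo(N^{-\varkappa-1})$, which itself follows from $\lVert P_N\rVert^2_{L^2(\omega)}=1$ combined with the $L^2$-asymptotic.

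The main obstacle is to show that the contributions from $\tilde g_\pm$ fit within the $\Ordo(N^{-\varkappa-1})$ error. Expanding $|S_N|^2=\sum_{j,k}N^{-j-k}X_j\bar X_k$ and performing iterated integration by parts in $r$ against the weight $r^{2N+1}$, each summand collapses to a boundary pairing $\int_\T\tilde g_+\cdot h\,\diffs$, where $h$ is an explicit polynomial combination of $X_l,\bar X_l,\Omega$ and their radial derivatives at $r=1$. The content of the argument is that, thanks to the Riemann-Hilbert conditions $X_j\in H^2_{-,0}$ and $X_j+\Xi_j\in H^2$ from Theorem~\ref{thm:coeff}, these combinations organize order by order in $N^{-1}$ into $H^2_-$-valued functions on $\T$, and the pairings vanish by the Fourier orthogonality $H^2_{-,0}\perp H^2_-$ in $L^2(\T)$. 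Finally, the $g_0$-part is analyzed by the same radial IBP identity $\int_{\rho_0}^1 r^{2N+1}f\,\diff r = \tfrac{f(1)}{2(N+1)}+\tfrac{1}{N+1}\int_{\rho_0}^1 r^{2N+1}Df\,\diff r$, with $D=-\tfrac{r}{2}\partial_r$, applied to $f_t(r)=(g_0\Omega X_j\bar X_k)(r\e^{\imag t})$; since $f_t(1)=0$, iterating and applying the Leibniz rule with the re-indexing $l\to\nu$, $\mu\to\nu+\mu$ yields a double sum with coefficients $\binom{\nu+\mu}{\nu}/(N+1)^{\nu+\mu+1}$. Combining with the factors $D_N^2 N^{-j-k}$, expanding $(N+1)^{-\nu-\mu-1}$ in powers of $N^{-1}$, and reorganizing $D^\mu$ via the binomial identity $(D-\Iop)^\mu=\sum_l\binom{\mu}{l}(-1)^{\mu-l}D^l$ reassembles the terms into the operator $\mathbf{W}_{N,\nu,\varkappa}$ of the statement. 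The technical heart of the proof is the Riemann-Hilbert cancellation for $\tilde g_\pm$, which requires the full recursive structure of the $X_j$ and cannot be obtained from the asymptotic form of $P_N$ alone.
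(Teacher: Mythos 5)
Your overall strategy matches the paper's: Whitney--Seeley decomposition of $G$, transfer to the annulus via $\Lamop_N$, the constant parts $G_\pm(\infty)$ from the $L^2$-normalization, radial Laplace/integration-by-parts with Leibniz and the $(D-\Iop)$ reindexing for $g_0$, and a Riemann--Hilbert cancellation for the non-constant pieces $\tilde g_\pm$. The paper's route differs only organizationally: it proves an intermediate Proposition~\ref{prop:distributional-hol} which pairs a \emph{single} factor of $\overline{p_N}$ against a holomorphic test function $g$, applies it to $q=p_Ng_+$ (and the conjugate to $\overline{p_N}g_-$), and thereby avoids handling the double expansion of $|S_N|^2$ directly.

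The one place your argument is not just abbreviated but actually incomplete as stated is the cancellation for $\tilde g_\pm$. You assert that ``each summand collapses to a boundary pairing $\int_\T\tilde g_+\cdot h\,\diffs$'' and that $h$ ``organizes order by order into $H^2_-$-valued functions.'' Neither claim is literally true for the raw Leibniz terms: after radial IBP the derivatives hit $\tilde g_+$ as well, and the factor multiplying $\tilde g_+$ contains radial derivatives of \emph{both} $X_j$, $\bar X_k$ \emph{and} $\Omega$ (whose $s$-derivatives at $s=0$ are neither holomorphic nor anti-holomorphic), so it does not land in $\overline{H^2}$ term by term. (Also, $H^2_{-,0}\perp H^2_-$ is not the relevant orthogonality since $H^2_{-,0}\subset H^2_-$; you need $H^2_{-,0}\perp H^2$ in the Hermitian pairing.) The mechanism that makes this work, and which you omit, is the holomorphicity trick used in the paper's Proposition~\ref{prop:distributional-hol}: because $\tilde g_+ S_N$ is holomorphic on $\D_\e$ and vanishes at infinity, the identity $-\partial_r^\times=(-2\bar\partial_z^\times+\imag\partial_t)$ kills the $\bar\partial$ part, converting radial derivatives on the holomorphic factor to tangential ones; integrating by parts on $\T$ then throws these onto $\overline{X_k\Omega}$, producing precisely the combinations $\sum_{l\le p}(-1)^{p-l}\overline{\Top^{p-l}X_l}\big|_\T$, which lie in $\overline{H^2}$ by the hierarchy \eqref{eq:T-ker}. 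Without this step, the ``organization into $H^2$'' is not visible. There are also small bookkeeping slips (a dropped factor of $N$ from $|f_N|^2=D_N^2N|S_N|^2$; powers of $N+1$ vs.\ $N$), but those are easily repaired once the main cancellation mechanism is in place.
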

\begin{rem}
In particular, we have that
\be
\int_{\scrD}G(z)|P_N(z)|^2\omega(z)\diffA(z)
=G_+(\infty)+G_{-}(\infty)+\Ordo(N^{-1})
\ee
as $N\to\infty$, which says that the wave function $1_{\scrD}|P_N|^2\omega$ 
approximates  
harmonic measure for $\C\setminus\overline\scrD$ 
relative to the point at infinity.
\end{rem}

We will obtain Theorem~\ref{thm:distributional} 
below in Section~\ref{s:distributional}.

\subsection{Constrained Coulomb gases  
and off-spectral asymptotics of polynomial Bergman kernels}
\label{ss:pol-ker}
Given a positive integer $N$, we denote by 
$K_N(z,w)$ the polynomial Bergman
kernel for the space $\Pol^2_N(\scrD,\omega)$.
Such kernels appear as correlation kernels for determinantal
Coulomb gas models. The weights considered here appear 
in connection with constrained (or conditioned) Coulomb gases, where the
particles are confined to the domain $\scrD$ by a \emph{hard edge}. 
Specifically, our situation corresponds to potentials of the form
$Q_N=Q^0+N^{-1}Q^1$ where $Q^0$ is constant on $\scrD$.
From a Coulomb gas perspective, 
it would be natural to consider confined weights of the form
$\e^{-NQ}1_{\scrD}$ where the potential $Q$ is a more 
general smooth subharmonic function. 
The analysis of that problem will require a better understanding of
an associated Laplacian growth problem with a fixed wall 
and a moving free boundary.
We expect the methods developed here to be helpful in
obtaining asymptotics of the orthogonal polynomial $P_{N,n}$ of degree $n$
with respect to the measure $\e^{-2NQ}\chi_{\scrD}\diffA$ 
when the degree $n$ is large compared to $N$.

The polynomial Bergman kernel may be expressed 
in terms of the orthogonal polynomials
\be
K_N(z,w)=\sum_{j=0}^{N}P_j(z)\overline{P_j(w)},\qquad (z,w)\in\C^2,
\ee
so the asymptotic expansion of $P_N$ obtained 
above gives (at least in principle) information about 
the kernel $K_N$. As observed in \cite{HW-OFF}, 
there is also a direct way to obtain asymptotics of 
$K_N(z,w)$ when $w$ is fixed in the \emph{off-spectral 
region}, which in this case equals 
$\C\setminus\overline{\scrD}$. We define the 
\emph{normalized reproducing kernel}
\be
\mathrm{k}_{N,w}(z)=\frac{K_N(z,w)}{\sqrt{K_N(w,w)}},\qquad z\in\C.
\ee
For fixed $w\in\C$, $\mathrm{k}_{N,w}$ is the unique element 
in the unit sphere of $\Pol_N^2(\scrD,\omega)$ which maximizes
the point evaluation functional $\Re f(w)$. 
For a given off-spectral point $w\in\C\setminus\overline{\scrD}$,
we denote by $\varrho_w$ the unique outer function
on $\C\setminus\overline{\scrD}$ which is positive
at the point $w$ with boundary values
\be\label{eq:varrho-def}
|\varrho_w(z)|^2=\frac{|\varphi(w)|^2-1}
{|\varphi(z)-\varphi(w)|^2},\qquad z\in\partial\scrD.
\ee
The following result gives the behavior of $\mathrm{k}_{N,w}$.

\begin{prop}
\label{prop:off-spectral}
Under the assumptions of Theorem~\ref{thm:asymp-exp}, 
there exist constants 
\be
D_{N,w}=\e^{-\imag(N\arg\varphi(w)+\arg\varphi'(w)+\mathrm{Im} V(w))}
\big(1+N^{-1}d_{1,w}+\ldots\big),\qquad d_{j,w}\in\R,
\ee 
and 
bounded holomorphic functions $B_{j,w}$ 
with $B_{0,w}\equiv 1$ and $B_{j,w}(\infty)=0$ for $j\ge 1$
defined in an open neighborhood of $\C\setminus\scrD$, 
such that for any fixed, 
$\varkappa\in\N$ and positive real numbers $A$, $\delta>0$ 
we have the asymptotics
\be
\mathrm{k}_{N,w}(z)=D_{N,w}N^{\frac12}
\varrho_w(z)\varphi'(z)\varphi^N(z)\e^{V(z)}\Big(\sum_{j=0}^{\varkappa}
N^{-j}B_{j,w}(z)+\Ordo(N^{-\varkappa-1})\Big),
\ee
as $N\to\infty$, 
for valid for $z,w$ with $\mathrm{dist}_\C(z,\scrD^c)\le A N^{-1}\log N$
and $\mathrm{dist}_\C(w,\scrD)\ge \delta$, respectively.
\end{prop}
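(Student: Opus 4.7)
The plan is to run the machinery developed for Theorem~\ref{thm:asymp-exp} on a modified orthogonality problem tailored to point-evaluation at the off-spectral point~$w$. The normalized reproducing kernel is characterized by
$$\int_\scrD \mathrm{k}_{N,w}(z)\overline{q(z)}\,\omega(z)\diffA(z)=\frac{q(w)}{K_N(w,w)^{1/2}},\qquad q\in\Pol_N,$$
together with the positivity condition $\mathrm{k}_{N,w}(w)>0$. A standard computation (or, conversely, a consistency check of the final expansion) shows $K_N(w,w)\asymp N|\varphi(w)|^{2N}$ on compact subsets of $\C\setminus\overline{\scrD}$, which accounts for the $N^{1/2}$ prefactor in the statement.

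The central observation is that multiplication by the outer function $\varrho_w$ converts this problem into one of the \emph{same} shape as the orthogonal-polynomial problem handled by Theorems~\ref{thm:asymp-exp} and~\ref{thm:coeff}. Writing $\mathrm{k}_{N,w}=\varrho_w\cdot(\text{polynomial-like factor})$ and absorbing $|\varrho_w|^2$ into the weight leads, after transplantation via $\varphi$, to a weighted orthogonality problem on $\D_\e$ with modified weight
$$\Omega_w(\zeta):=\bigl|\varrho_w\circ\varphi^{-1}(\zeta)\bigr|^2\,\Omega(\zeta),\qquad \zeta\in\T,$$
whose Szeg\H{o} function relative to $\D_\e$ is $V\circ\varphi^{-1}+\log\varrho_w\circ\varphi^{-1}$ up to an additive imaginary constant. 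Crucially, on the off-spectral region $\mathrm{dist}(w,\scrD)\ge\delta$, the modified weight $\Omega_w$ satisfies the same positivity and $C^\omega$-smoothness hypotheses as $\Omega$ (uniformly in $w$), so the collapsed orthogonality conditions \eqref{eq:T-ker-intro}, the closed-form recursion from Theorem~\ref{thm:coeff}, and the orthogonal foliation flow of Section~\ref{s:foliation} all run in parallel. This yields the expansion with operators $\Top_w:=\Mop_{\Omega_w}^{-1}(z\partial_z+\Iop)\Mop_{\Omega_w}$ in place of $\Top$, producing coefficients $X_{j,w}=\Qop\Top_w[(\Qop-\Iop)\Top_w]^{j-1}X_{0,w}$ with $X_{0,w}\equiv 1$, and then $B_{j,w}=X_{j,w}\circ\varphi$.

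Having obtained the $L^2$-version of the expansion for the modified problem from the foliation-flow construction, one transfers to the pointwise statement by the same local $\bar\partial$- or Cauchy-integral argument that yields Theorem~\ref{thm:asymp-exp} from Theorem~\ref{thm:asymp-exp-L2}. The unimodular prefactor $D_{N,w}$ is then forced by the positivity requirement $\mathrm{k}_{N,w}(w)>0$: it must cancel the total phase of $\varrho_w(w)\varphi'(w)\varphi^N(w)e^{V(w)}$, which gives the stated exponential $e^{-\imag(N\arg\varphi(w)+\arg\varphi'(w)+\mathrm{Im}\,V(w))}$, while the real factor $1+N^{-1}d_{1,w}+\cdots$ comes from unwinding the correction $K_N(w,w)^{-1/2}$ against the corrections $B_{j,w}(w)$.

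\textbf{Main obstacle.} The principal difficulty is the $w$-dependence. The modified weight $\Omega_w$, the Szeg\H{o} function, the coefficients $B_{j,w}$, and every error estimate from the foliation flow depend on $w$, and the assertion requires uniformity over the closed off-spectral region $\{w:\mathrm{dist}(w,\scrD)\ge\delta\}$. Since $\Omega_w$ and all its derivatives depend real-analytically on $w$ on compact subsets away from $\scrD$, and since the foliation flow construction is algebraic in the weight data, the required uniformity is implicit, but tracking it through each stage (the symbolic recursion, the cutoff arguments, and the $\bar\partial$-correction in the foliation flow) is the technical heart of the proof. A secondary subtlety is that the polynomial degree is $N$ rather than $N-1$, so the Hardy-space decomposition entering \eqref{eq:T-ker-intro} requires a small shift; this is routine but must be done carefully to get the constant $D_{N,w}$ right.
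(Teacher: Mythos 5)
Your overall plan (establish an $L^2$-expansion by adapting the foliation-flow machinery, then promote it to a pointwise statement via $\bar\partial$-estimates and the Bernstein--Walsh bound, and finally fix $D_{N,w}$ by the positivity normalization) matches the paper's. You also correctly identify that the factor $\varrho_w$ encodes harmonic measure at $\varphi(w)$, and that uniformity in $w$ over $\{\operatorname{dist}(w,\scrD)\ge\delta\}$ needs tracking. But the central reduction is wrong: writing $\mathrm{k}_{N,w}=\varrho_w\cdot G_N$ and moving $|\varrho_w|^2$ into the weight does \emph{not} turn the problem into an orthogonal-polynomial problem of the type covered by Theorems~\ref{thm:asymp-exp} and~\ref{thm:coeff}. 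The reproducing kernel is orthogonal to $(z-w)\Pol_{N-1}$, and after the change of density the test functions become $q/\varrho_w$, which are \emph{still} functions vanishing at $w$ --- they are not $\Pol_{N-1}$, and after transplantation they correspond to $H^2_-$-functions vanishing at $\varphi(w)$ rather than at $\infty$. In other words the codimension-one subspace one must be orthogonal to has changed its distinguished point, and no choice of weight makes these coincide. Moreover, the genuine orthogonal polynomial for the weight $|\varrho_w|^2\omega$ has the form $\varphi'\varphi^N\e^{V}\varrho_w^{-1}(\cdots)$ (the Szeg\H{o} function of the modified weight carries $\varrho_w^{-1}$, not $\varrho_w$), which is not the object $\mathrm{k}_{N,w}\sim\varrho_w\varphi'\varphi^N\e^V(\cdots)$ you are after. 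Consequently the closed-form recursion $X_{j,w}=\Qop\Top_w[(\Qop-\Iop)\Top_w]^{j-1}X_{0,w}$ with the \emph{same} $\Qop=\Pop_{H^2_{-,0}}\Rop$ is not justified: the projection would need to respect the vanishing condition at $\varphi(w)$, not at $\infty$.

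The paper addresses exactly this point by keeping the weight $\Omega$ fixed and instead (i) changing the ansatz to $|f_s|^2=s^{-1}|\varrho_w\circ\varphi^{-1}|^2\e^{g_{s,w}}$, and (ii) changing the \emph{target} of the flow equation from constant density to the harmonic-measure density for $\D_\e$ at $\varphi(w)$, i.e.\ requiring $\log\flowdens^w_{s,t}+\log s+s^{-1}t=\log\frac{\diff\varpi_{\D_\e,\varphi(w)}}{\diffs}+\Ordo(s^{\varkappa+1})$. With that target, the disintegrated integral in \eqref{eq:approx-orth-1} becomes the Poisson integral of $h_N/f_N$ at $\varphi(w)$, which vanishes precisely because $h_N(\varphi(w))=0$ (and $f_N(\varphi(w))\ne0$, as $\varrho_w(w)>0$). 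This is where the vanishing condition at $\varphi(w)$ actually enters the argument, and it is the heart of the proof --- not a ``secondary subtlety'' or a ``routine shift'' as you put it. Your ``main obstacle'' (uniformity in $w$) is a fair concern but not where the essential modification lies.
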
 

As in Theorem~\ref{thm:coeff},
it is possible to obtain closed form expressions for the coefficients $B_{j,w}$
in terms of iterates of a corresponding operator $\Top_w$.

The proof of Proposition~\ref{prop:off-spectral} is along the lines of 
the proof of 
Theorem~\ref{thm:asymp-exp}.
The main difference is that the Berezin kernel
$|\mathrm{k}_{N,w}|^2\omega$
should approximate the harmonic measure 
in $\C\setminus\overline\scrD$ for the point $w$ instead of the harmonic measure
for the point at infinity, which 
explains the presence of the factor
$\varrho_w(z)$. 
In Subsection~\ref{ss:off-spectral-pf} below, 
we discuss the necessary changes in the proof.

\section{Higher order corrections via Riemann-Hilbert 
hierarchies}\label{s:Toeplitz}
\subsection{Canonical positioning}
\label{ss:heuristic-intro}
We begin with the algorithmic aspects of the
asymptotic expansion, and in particular we
compute of the coefficient functions 
$(B_j)_{j\in\N}$.
This is done under the assumption that
Theorem~\ref{thm:asymp-exp-L2} below) holds, 
and amounts to collapsing
the planar orthogonality relations into orthogonality 
relations on the unit circle. 

According to
Theorem~\ref{thm:asymp-exp-L2}
there exists a holomorphic function $F_N$ 
(a truncated asymptotic expansion, 
also called a $\varkappa${\em-abschnitt}) 
of polynomial growth
\be\label{eq:FN-formula}
F_N(z)=D_NN^\frac12\varphi'(z)\varphi^N(z)
\e^{V(z)}\sum_{j=0}^{\varkappa}N^{-j}B_j(z),
\ee
where $B_j$ are bounded and holomorphic functions 
on $\D_\e(0,\rho)=\{z\in\C:|z|>\rho\}$
for some $0<\rho<1$, and $D_N=1+d_1N^{-1} + 
\cdots+d_\varkappa N^{-\varkappa}$ is a
real positive constant, 
such that as $N\to+\infty$,
\be
\lVert P_N-\chi_0 F_N\rVert_{L^2(\scrD,\omega)}
=\Ordo(N^{-\varkappa-1}).
\ee
Here, we recall that $\chi_0$ is a cut-off 
function which vanishes deep inside $\scrD$
but is identically one in a neighborhood of 
the exterior domain $\C\setminus\scrD$.
In particular, for $q\in\mathrm{Pol}_{N-1}$ 
we have the approximate orthogonality
\be\label{eq:almost-orth}
\int_\scrD \chi_0^2(z)q(z)\overline{F_N(z)}\omega(z)\diffA(z)=
\Ordo(N^{-\varkappa-1}\lVert q\rVert_{L^2(\scrD,\omega)}),
\ee
while $\lVert \chi_0^2 F_N\rVert_{L^2(\scrD,\omega)}
=1+\Ordo(N^{-\varkappa-1})$
holds by the triangle inequality.

\begin{rem}\label{rem:ext-to-qpol}
The approximate orthogonality relation 
\eqref{eq:almost-orth} holds more 
generally for all holomorphic functions 
$q$ on $\C\setminus\scrD_\rho$ in 
$L^2(\scrD\setminus\scrD_\rho,\omega)$
of polynomial growth $|q(z)|=\Ordo(|z|^{N-1})$ 
by Proposition~\ref{prop:flow-conseq} below. 
The norm on the right-hand side should
then be replaced by $\lVert \chi_0 q\rVert_{L^2(\scrD,\omega)}$. 
\end{rem}

We recall that $\Omega$ is the modified weight function given by
\be\label{eq:def-mod-weight}
\Omega=\e^{2\Re V\circ\varphi^{-1}}\omega\circ\varphi^{-1},
\ee
where $\varphi:\scrD^c\to\D_\e$ is the Riemann mapping
with the standard normalization at infinity, which we 
recall extends across $\partial\scrD$. 
The function $\Omega$ is 
defined and real-analytic on the annulus 
$\D\setminus\D(0,\rho)$,
where $0<\rho<1$ is the parameter from the main theorem. 
By possibly increasing $\rho$ slightly, we may assume that 
$\Omega\ge \epsilon_0$ on the annulus 
$\D\setminus\D(0,\rho)$ for some constant 
$\epsilon_0>0$. 
In view of the definition of
the Szeg\H{o} function $V$, we have $\Omega\vert_{\T}\equiv 1$.
We recall that $X_j=B_j\circ\varphi^{-1}$ so that the 
functions $X_j$ are holomorphic on $\D_\e(0,\rho)$ with 
$X_j(\infty)=0$ for all $j\ge 1$, and put
\be\label{eq:fN-def}
f_N=D_N N^{\frac12}\sum_{j=0}^\varkappa N^{-j}X_j.
\ee
If $\Lamop_N$ is the {\em canonical positioning operator}
\be\label{eq:Lamop-def}
\Lamop_N f(z)= \varphi'(z)\varphi(z)^N\e^{V(z)} (f\circ\varphi)(z),
\ee
we have $F_N=\Lamop_N[f_N]$. By the change-of-variables
formula, $\Lamop_N$
acts isometrically and isomorphically
\be\label{eq:Lamop-isom}
\Lamop_N:L^2\big(\D\setminus\D(0,\rho),r_N\Omega\, \diffA\big)\to 
L^2\big(\scrD\setminus\scrD_\rho,\omega\big)
\ee
where we use $r_{N}$ to denote $r_{N}(z)=|z|^{2N}$.
Moreover, $\Lamop_N$ preserves holomorphicity, and we have 
asymptotically
\be\label{eq:Lamop-asymp}
|\Lamop_N f (z)| \asymp |z^N f(z)|\quad  
\text{as}\quad |z|\to+\infty.
\ee

\subsection{Collapsing the orthogonality relations}
We apply the relation \eqref{eq:almost-orth} to the family of functions
$q=\Lamop_N[e_k]$, where $e_k(w)=w^{-k}$ for
$k\ge 1$. Since these functions are not necessarily polynomials,
we interpret \eqref{eq:almost-orth} in the generalized sense
of Remark~\ref{rem:ext-to-qpol}.
We agree to interpret the product $\chi_0 q$ 
as zero wherever the cut-off function 
$\chi_0$ vanishes, also where $q$ is undefined. 
As a consequence, the product
$\chi_0 q$ gets to be defined globally on $\C$.
Since the modified weight $\Omega$ is bounded it is 
evident that for any fixed $k\in\N$ we have
\be\label{eq:norm-basic-pol}
\lVert \chi_0 q\rVert_{L^2(\scrD,\omega)}^2
=\int_{\scrD}|\chi_0^2\,\Lamop_N[e_k]|^2\omega\,\diffA
\lesssim \int_{\scrD\setminus\scrD_\rho}|\varphi|^{2(N-k)}
\diffA\lesssim N^{-1}.
\ee
By the isometric property \eqref{eq:Lamop-isom} of $\Lamop_N$ we apply 
\eqref{eq:almost-orth}
while taking Remark~\ref{rem:ext-to-qpol}
and the norm bound \eqref{eq:norm-basic-pol} into account, to find
\begin{multline}\label{eq:norm-approx-qk}
\int_{\scrD}\chi_0^2(z)F_N(z)\overline{\Lamop_N[e_{k}](z)}\omega(z)\diffA(z)
= \int_{\D}\chi_1^2(w)f_N(w)\bar{w}^{-k}|w|^{2N}\Omega(w)\diffA(w)
\\
=\int_{0}^{2\pi}\int_0^{\infty}\chi_1^2(\e^{-s+\imag t})
f_N(\e^{-s+\imag t})
\e^{k(s+\imag t)}\Omega(\e^{-s+\imag t})
\e^{-2(N+1)s}\frac{\diff s\diff t}{\pi}=\Ordo(N^{-\varkappa-\frac32}),
\end{multline}
where we used an anti-holomorphic exponential change of 
variables $w=\e^{-s+\imag t}$ and where $\chi_1:=\chi_0\circ\varphi^{-1}$
is another cut-off function.
To make things as simple as possible, 
let us agree that $\chi_1(\e^{-s+\imag t})$ is radial, 
so that $\chi_1(\e^{-s+\imag t})=\chi_1(\e^{-s})$,
and that for some constant $\alpha>1$ with
$0\le \rho<\alpha^2 \rho<1$
we have $\chi_1\equiv 0$ on $\D(0,\alpha\rho)$ while $\chi_1\equiv 1$ on 
$\D\setminus\D(0,\alpha^2 \rho)$.
We integrate first in the $s$-variable, 
and notice that \eqref{eq:norm-approx-qk} reads
\be\label{eq:hardy}
\int_0^{2\pi}\e^{\imag k t}\Big(\int_0^{\infty}G_{t,N,k}(s)
\e^{-2Ns}\diff s\Big)\diff t
=\Ordo(N^{-\varkappa-\frac32}),\qquad k=1,2,3,\ldots,
\ee
where
\be\label{eq:G-form}
G_{t,N,k}(s)=\chi_1(\e^{-s})f_N(\e^{-s+\imag t})
\e^{(k-2)s}\Omega(\e^{-s+\imag t}).
\ee
The expression \eqref{eq:hardy} is suitable for asymptotic analysis.

\begin{prop}\label{prop:PI-Laplace}
Fix $\varkappa\in\N$ and let $G\in C^{\infty}([0,\infty))$ 
with $G^{(\varkappa+1)}\in L^\infty(\R_+)$. Then we have
\be
\int_0^\infty G(s)\e^{-\lambda s}\diff s= \frac{G(0)}{\lambda}
+\frac{G'(0)}{\lambda^2}+
\frac{G''(0)}{\lambda^3}+\ldots 
+ \frac{G^{(\varkappa)}(0)}{\lambda^{\varkappa+1}} 
+ \Ordo\Big(\frac{1}{\lambda^{\varkappa+2}}
\lVert G^{(\varkappa+1)}\rVert_{L^\infty(\R_+)}\Big)
\ee
as $\lambda\to+\infty$.
\end{prop}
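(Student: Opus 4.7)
The plan is to prove this by iterated integration by parts, which is the classical Watson's lemma argument for Laplace transforms on the half-line. There are really no substantial obstacles; the only point requiring a small amount of care is justifying that the boundary terms at $s=+\infty$ vanish at each stage of the integration by parts, since we only assume an $L^\infty$ bound on the top derivative.

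First, I would establish polynomial growth bounds for the lower-order derivatives. From $G^{(\varkappa+1)} \in L^\infty(\R_+)$ and the fundamental theorem of calculus,
\be
G^{(j)}(s) = G^{(j)}(0) + \int_0^s G^{(j+1)}(t)\,\diff t
\ee
applied recursively starting from $j = \varkappa$ and working downward, shows that $|G^{(j)}(s)| \lesssim 1 + s^{\varkappa+1-j}$ for $0 \le j \le \varkappa$. In particular, for any fixed $\lambda>0$ we have $G^{(j)}(s) \e^{-\lambda s} \to 0$ as $s \to \infty$ for every $j \le \varkappa$, so boundary terms at infinity vanish.

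Next, I would perform a single integration by parts with $u = G(s)$, $\diff v = \e^{-\lambda s}\diff s$, obtaining
\be
\int_0^\infty G(s)\e^{-\lambda s}\diff s = \frac{G(0)}{\lambda} + \frac{1}{\lambda}\int_0^\infty G'(s)\e^{-\lambda s}\diff s,
\ee
and iterate this step $\varkappa+1$ times. A straightforward induction gives
\be
\int_0^\infty G(s)\e^{-\lambda s}\diff s = \sum_{j=0}^{\varkappa}\frac{G^{(j)}(0)}{\lambda^{j+1}} + \frac{1}{\lambda^{\varkappa+1}}\int_0^\infty G^{(\varkappa+1)}(s)\e^{-\lambda s}\diff s.
\ee

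Finally, I would estimate the remainder using the $L^\infty$ bound on $G^{(\varkappa+1)}$:
\be
\left|\frac{1}{\lambda^{\varkappa+1}}\int_0^\infty G^{(\varkappa+1)}(s)\e^{-\lambda s}\diff s\right| \le \frac{\lVert G^{(\varkappa+1)}\rVert_{L^\infty(\R_+)}}{\lambda^{\varkappa+1}}\int_0^\infty \e^{-\lambda s}\diff s = \frac{\lVert G^{(\varkappa+1)}\rVert_{L^\infty(\R_+)}}{\lambda^{\varkappa+2}},
\ee
which is the asserted error bound. The hardest step, such as it is, is simply the preliminary polynomial-growth estimate; everything after that is mechanical.
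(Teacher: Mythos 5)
Your proof is correct and follows essentially the same route as the paper's: iterated integration by parts followed by the trivial $L^\infty$ estimate on the remainder. The only difference is that you explicitly justify the vanishing of boundary terms at infinity via polynomial growth bounds on the lower-order derivatives, a step the paper folds into the remark that the identity "holds since $G^{(\varkappa+1)}\in L^\infty(\R_+)$."
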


\begin{proof}
By iterated integration by parts, 
we find the formula
\be\label{eq:int-parts}
\int_0^\infty G(s)\e^{-\lambda s}\diff s= \sum_{j=0}^{\varkappa}
\frac{G^{(j)}(0)}{\lambda^{j+1}}
+\frac{1}{\lambda^{\varkappa+1}}\int_0^\infty 
G^{(\varkappa+1)}(s)\e^{-\lambda s}\diff s
\ee
which holds since $G^{(\varkappa+1)}\in L^\infty(\R_+)$.
Moreover, we may estimate the integral in the right-hand side
of \eqref{eq:int-parts}:
\be
\Big|\int_0^\infty G^{(\varkappa+1)}(s)\e^{-\lambda s}\diff s\Big|
\le \lVert G^{(\varkappa+1)}\rVert_{L^\infty(\R_+)}\int_0^\infty\e^{-\lambda s}\diff s
\le \lambda^{-1}\lVert G^{(\varkappa+1)}\rVert_{L^\infty(\R_+)},
\ee
which yields the assertion.
\end{proof}

\subsection{Derivation of the Riemann-Hilbert hierarchy}
We let $h_N$ be the function $h_N(z)=f_N(\e^{-\bar z})\Omega(\e^{-\bar z})$. Since $f_N$
has an asymptotic expansion in terms of the functions $X_j$ and $\Omega$ is a fixed
function, the function $h_N$
admits an asymptotic expansion $h_N=D_N N^{\frac12}\sum N^{-j}Y_j$ with 
$Y_j(z)=(X_j\Omega)(\e^{-\bar{z}})$, and where we recall that 
$D_N=1+\Ordo(N^{-1})$.
We define the cut-off function $\chi_2(s):=\chi_1(\e^{-s})$, 
so that the inner integral in \eqref{eq:hardy} takes the form
\be
\int_0^\infty G_{t,N,k}(s)\e^{-2Ns}\diff s=
\int_{0}^{\infty}\chi_2^2(s)h_N(s+\imag t)\e^{(k-2)s}\e^{-2Ns}\diff s.
\ee
We use the asymptotic expansion of $h_N$ and 
Proposition~\ref{prop:PI-Laplace} with $\lambda=2N$ 
to find
\begin{multline}\label{eq:hardy1}
\int_{0}^{\infty}\chi_2^2(s)h_N(s+\imag t)\e^{(k-2)s}\e^{-2Ns}\diff s
\\
=D_NN^\frac12\sum_{l= 0}^{\varkappa} N^{-l}\int_{0}^{\infty}
\chi_2^2(s)Y_l(s+\imag t)\e^{(k-2)s}\e^{-2Ns}\diff s
\\
=D_NN^\frac12\sum_{j+l\le\varkappa}\frac{1}{2^{j+1}N^{j+l+1}}
\partial_s^{j}\Big(Y_l(s)\e^{(k-2)s}\Big)\big\vert_{s=0}
+\Ordo(N^{-\varkappa-\frac32}),
\end{multline}
so that by Leibniz formula 
we have
\begin{multline}\label{eq:hardy-comput}
D_N^{-1}\int_{0}^{\infty}\chi_2^2(s)
h_N(s+\imag t)\e^{(k-2)s}\e^{-2Ns}\diff s
\\
= N^{\frac12}\sum_{j+l\le \varkappa}\frac{1}{2^{j+1}N^{j+l+1}}
\sum_{r=0}^j
\binom{j}{r}(k-2)^r
\partial_s^{j-r}Y_l(s+\imag t)\big\vert_{s=0}
+\Ordo(N^{-\varkappa-\frac32})\\
=\frac{N^{\frac12}}{2}
\sum_{p= 0}^{\varkappa}\frac{1}{N^{p+1}}\sum_{l= 0}^p
\sum_{r=0}^{p-l}\frac{(k-2)^{r}}{2^{p-l}}
\binom{p-l}{r}\partial_s^{p-l-r}Y_l(s+\imag t)
\big\vert_{s=0} +\Ordo(N^{-\varkappa-\frac32}).
\end{multline}
We notice that for polynomials $P$ we have
\be\label{eq:k-theta-op}
\int_0^{2\pi} \e^{\imag k t}P(k)f(t)\diff t=
\int_{0}^{2\pi} \e^{\imag k t}P(\imag\partial_t)f(t)\diff t.
\ee
In view
of \eqref{eq:k-theta-op}, the 
condition \eqref{eq:hardy} combined with \eqref{eq:hardy-comput} 
asserts that for all $p\ge 0$ we have
\be
\forall 
k\ge 1:\;\quad\sum_{l=0}^p\sum_{r=0}^{p-l}\frac{1}{2^{p-l}}
\binom{p-l}{r}\int_{0}^{2\pi}\e^{\imag k t}
(\imag\partial_t-2\Iop)^{r}
\partial_s^{p-l-r}Y_l(s+\imag t)\big\vert_{s=0}\diff t =0.
\ee
Contracting the inner sum using the binomial theorem, we find
that
\be\label{eq:hardy2}
\forall p\ge 0,\forall k\ge 1\,:\;\quad\sum_{l=0}^p\int_\T\e^{\imag k t}
\big(\tfrac{1}{2}(\partial_s+\imag \partial_t)-\Iop\big)^{p-l}Y_l(s+\imag t)
\big\vert_{s=0}\diff t.
\ee
Now, expressing this in terms of the original function $f$ 
we claim that
\be\label{eq:Top-consequence}
(\tfrac{1}{2}(\partial_s+\imag \partial_t)-\Iop)^{j}
Y_l(s+\imag t)\vert_{s=0}=(-1)^j\Top^j X_l\vert_\T,\qquad j=0,1,2,\ldots,
\ee
where the operator $\Top$ is given by
\be\label{eq:Top-def}
\Top=\Mop_\Omega^{-1}(\partial_z^\times +\Iop)\Mop_\Omega,
\ee
and where $\Mop_\Omega$ is the operators of
multiplication by
$\Omega$.
Indeed, we write out the relation 
$Y(s+\imag t)=(X\Omega)(\e^{-s+\imag t})$
and notice that
\begin{multline*}
\big(\tfrac12(\partial_s+\imag \partial_t)-\Iop\big)Y(s+\imag t)=
\big(\tfrac12(\partial_s+\imag \partial_t)-\Iop\big)(X\Omega)(\e^{-s+\imag t})
\\=(-\tfrac12(\partial^\times_r -\imag \partial_\theta)
-\Iop)(X\Omega)(r\e^{\imag \theta})
=-(\partial_z^\times +\Iop)(X\Omega)(z).
\end{multline*}
By iteration this gives the relation 
\eqref{eq:Top-consequence}.
In conclusion, the condition \eqref{eq:hardy2} becomes
\be\label{eq:hardy3}
\forall p\ge 0,\forall k\ge 1:\;\quad 
\sum_{l=0}^p(-1)^{p-l}\int_\T \e^{\imag k t}
\Top^{p-l}X_l(\e^{\imag t})\diff t=0.
\ee
When $p=0$ this says that $X_0\in H^2$. 
But we know a priori that $B_0$ is a bounded analytic function
in $\C\setminus\scrD$ so that $X_0\in H^2_-$.
Hence $X_0$ must be a constant, and we choose $X_0\equiv 1$.
For $p\ge 1$, $B_p$ is a bounded analytic function in $\C\setminus\scrD$
with $B_p(\infty)=0$, so that $X_p\in H^2_{-,0}$ for $p\ge 1$.
As a consequence, we see that the condition \eqref{eq:hardy3} 
for $p\ge 1$ is equivalent to the Riemann-Hilbert hierarchy
\be\label{eq:T-ker}
X_p\in H^2_{-,0}\cap\big(-\Xi_{p} + H^2\big),\qquad \Xi_p
=\sum_{l=0}^{p-1}(-1)^{p-l}\Rop\Top^{p-l}X_{l},\qquad p\ge 1,
\ee
where $\Rop$
is the restriction operator to $\T$.
Hence, in view of \cite[Proposition~2.5.1]{HW-ONP} the 
recursive solution to these conditions is
\be\label{eq:T-ker-sol}
\forall p\ge 1:\;\quad X_p=-\Pop \Xi_p= 
 \sum_{l=0}^{p-1}(-1)^{p-l+1}\Qop\Top^{p-l}X_{l}, 
\ee
where $\Qop=\Pop\Rop$ and
$\Pop=\Pop_{H^2_{-,0}}$ is the orthogonal projection onto the
Hardy space of functions on $\D_\e$ vanishing at infinity.

\subsection{Recursive solution of the Riemann-Hilbert hierarchy} 
The recursion can be solved as follows.
\begin{prop}\label{prop:recursion}
For each bounded holomorphic function 
$X_0\in H^\infty(\D_\e(0,\rho))$, 
the recursion \eqref{eq:T-ker-sol}
has a unique solution which is given by
\be
X_p=\Qop\Top[\Qop\Top-\Top]^{p-1}X_0,\qquad p\ge 1.
\ee
\end{prop}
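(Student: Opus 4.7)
The recursion \eqref{eq:T-ker-sol} determines $X_p$ from $X_0,\ldots,X_{p-1}$ in a strictly lower-triangular fashion, so uniqueness is immediate and only the closed-form expression requires verification. My plan is to introduce the candidate $Y_p := \Qop\Top\,\Sop^{p-1} X_0$ (with $\Sop := \Qop\Top - \Top = (\Qop-\Iop)\Top$) together with $Y_0 := X_0$, and check directly that the family $(Y_p)_{p\ge 0}$ satisfies the same recursion as $(X_p)_{p\ge 0}$; uniqueness then forces $X_p = Y_p$.

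The key observation is the trivial operator identity $\Qop\Top = \Top + \Sop$, which lets me rewrite $Y_p = \Top\,\Sop^{p-1} X_0 + \Sop^p X_0$ for $p\ge 1$. When these two pieces are substituted for $Y_l$ on the right-hand side of \eqref{eq:T-ker-sol}, the resulting double sum telescopes. Explicitly, after reindexing the sub-sum coming from $\Top\,\Sop^{l-1}$ by $m=l-1$, every term of the form $\Qop\Top^{p-k}\Sop^k X_0$ appears in opposite-sign pairs that cancel for $k\in\{0,\ldots,p-2\}$; the $l=0$ boundary contribution $(-1)^{p+1}\Qop\Top^p X_0$ is precisely what is needed to annihilate the $m=0$ term. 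The only surviving summand is the $l=p-1$ term of the sub-sum originating from $\Sop^l X_0$, and it equals exactly $\Qop\Top\,\Sop^{p-1} X_0 = Y_p$.

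Thus the whole argument is index bookkeeping combined with the splitting $\Qop\Top = \Top + \Sop$; no new analytic input is needed beyond the mapping properties of $\Qop$ and $\Top$ already established. A slicker packaging goes via the formal generating function $\Phi(z) := \sum_{p\ge 0} z^p X_p$: summing the recursion yields $\Phi(z) = (\Iop + z\Top)[\Iop - z\Sop]^{-1} X_0$, while the generating function of the candidate rearranges to $[\Iop + z\Qop\Top(\Iop - z\Sop)^{-1}]X_0$, and the one-line identity $\Iop + z\Top = (\Iop - z\Sop) + z\Qop\Top$ shows the two are equal. The one place to be careful is the telescoping itself: keeping the signs and exponent shifts straight is what I would focus on, but it is the only real content of the proof.
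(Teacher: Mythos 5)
Your proof is correct. The logical backbone — uniqueness is free because the recursion is strictly lower-triangular, and one only needs to check that the candidate family $Y_p = \Qop\Top\Sop^{p-1}X_0$ satisfies the recursion — is the same as the paper's, and both proofs hinge on the same one-line algebraic identity $\Qop\Top = \Top + \Sop$. Where you differ is in organization: the paper establishes the formula by induction on $p$, peeling off one factor of $\Sop = \Qop\Top-\Top$ at a time from the candidate and invoking the induction hypothesis to convert $\Qop\Top\Sop^{p_0-1-k}X_0$ back into $X_{p_0-k}$, then summing the resulting telescope. You instead avoid induction altogether: by writing $Y_l = \Top\Sop^{l-1}X_0 + \Sop^l X_0$ for $l\ge 1$ and plugging directly into the right-hand side of \eqref{eq:T-ker-sol}, the reindexed double sum telescopes on the nose, leaving only $\Qop\Top\Sop^{p-1}X_0$. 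That is a cleaner, self-contained verification, and it has the advantage that the boundary term $(-1)^{p+1}\Qop\Top^p X_0$ coming from $l=0$ is seen to cancel automatically against the $m=0$ piece, rather than requiring the separate terminal identity the paper spells out at $k=p_0-1$ (a line which, incidentally, reads more naturally as $\Qop\Top^{p_0-1}X_1 - \Qop\Top^{p_0}X_0$ — your bookkeeping sidesteps that delicacy). The generating-function packaging you add, via $\Iop + z\Top = (\Iop - z\Sop) + z\Qop\Top$, is a nice compact encoding of the same telescope and matches the formal Neumann-series remark the paper makes after Theorem~\ref{thm:coeff}; it buys brevity at the (small) cost of treating the series formally, which is harmless here since everything is a finite identity in each degree of $z$.
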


\begin{proof}
Due to the triangular nature of the recursion, 
it is clear that it admits a unique solution for 
each choice of $X_0\in H^\infty(\D_\e(0,\rho))$.

It remains to verify that the claimed solution meets the recursion.
We prove this by induction. For the base case
$p=1$ this holds trivially, since $X_1=\Qop\Top X_0$.

We assume the formula for $X_p$ is valid for
$p\le p_0-1$, and proceed to show that if we define $X_{p_0}$ by
the given formula, then the recursion holds for 
$p=p_0$ as well.
Hence, we compute
\begin{align}
X_{p_0}&=\Qop\Top[\Qop\Top-\Top]^{p_0-1}X_0=\Qop\Top[\Qop\Top-\Top]
[\Qop\Top-\Top]^{p_0-2}X_0\\
&=\Qop\Top\Qop\Top[\Qop\Top-\Top]^{p_0-2}X_0-\Qop\Top^2
[\Qop\Top-\Top]^{p_0-2}X_0\\
&=\Qop\Top X_{p_0-1}-\Qop\Top^2[\Qop\Top-\Top]^{p_0-2}X_0,
\end{align}
where we use the induction hypothesis to replace the first term on the 
second line by $-\Qop\Top X_{p_0-1}$.
If $p_0=2$, we are done. For $p_0\ge 2$, this procedure may be repeated 
with the last term, to give
\begin{align}
X_{p_0}&=\Qop\Top X_{p_0-1} - \Qop\Top^2\Qop\Top[\Qop\Top-\Top]^{p_0-3}X_0 
+ \Qop\Top^3[\Qop\Top-\Top]^{p_0-3}X_0\\
&=\Qop\Top X_{p_0-1}-\Qop\Top^2 X_{p_0-2} + \Qop\Top^3[\Qop\Top-\Top]^{p_0-3}X_0,
\end{align}
and so on. In each step, we use the equality ($0\le k<p_0-1$)
\be
\Qop\Top^{k}[\Qop\Top-\Top]^{p_0-k}X_0=
\Qop\Top^{k}X_{p_0-k}-\Qop\Top^{k+1}[\Qop\Top-\Top]^{p_0-k-1}X_0,
\ee
which holds in view of the induction hypothesis.
The procedure ends when $k=p_0-1$, for which the relevant identity
reads
\be
\Qop\Top^{p_0-1}[\Qop\Top-\Top]X_0=\Qop\Top^{p_0} X_{0}-
\Qop\Top^{p_0}\Qop X_0=\Qop\Top^{p_0} X_0.
\ee 
We then get for the full expression
\be
\Qop\Top[\Qop\Top-\Top]^{p_0-1}X_0=-\sum_{k=1}^{p_0}(-1)^k\Qop\Top^k X_{p_0-k}
=\sum_{k=0}^{p_0-1}(-1)^{p_0-k+1}\Qop\Top^{p_0-k}X_k,
\ee
which completes the verification that $X_{p_0}$ given by the desired formula
satisfies the recursion.
\end{proof}

\begin{proof}[Proof of Theorem~\ref{thm:coeff}]
The conclusion of the theorem is now
immediate in view of the Riemann-Hilbert conditions 
\eqref{eq:T-ker-sol} and Proposition~\ref{prop:recursion}.
\end{proof}

\section{The orthogonal foliation flow}
\label{s:foliation}
\subsection{The asymptotic expansion in \texorpdfstring{$L^2$}{L2}}
We obtain Theorem~\ref{thm:asymp-exp} from its
$L^2$-analogue Theorem~\ref{thm:asymp-exp-L2}.
We proceed in two steps. First, we construct a family of
{\em approximately orthogonal quasipolynomials} 
(cf.\ \cite[Subsection~3.2]{HW-ONP}), and then we show that these 
approximate well the true orthogonal polynomials in norm.
The latter part of the proof is based on 
H{\"o}rmander $L^2$-estimates for the $\bar\partial$-operator
with polynomial growth control for the solution.
The quasipolynomials $F_N$ are of form $F_N=\Lamop_N[f_N]$, where
\be
f_N=D_N N^{\frac12}\sum_{j=0}^\varkappa N^{-j}X_j
\ee
for some bounded
holomorphic functions $X_j$ defined on $\C\setminus\scrD_{\rho}$,
where $\Lamop_N$ is the canonical positioning operator 
\eqref{eq:Lamop-def}, and the radius $\rho$ has $0<\rho<1$.

\subsection{The main lemma}
We put $s=N^{-1}$ and consider it
as a positive continuous parameter. After passage to
the unit disk via the operator $\Lamop_N$, the fact that $N$
is an integer will be inessential.
By a slight abuse of notation, we put 
\be\label{eq:def-fs}
f_s=D_s s^{-\frac12}\sum_{j=0}^{\varkappa}s^j X_j.
\ee
We consider a smooth family of orthostatic conformal 
mappings $\psi_{s,t}$
of the closed exterior disk $\overline{\D}_\e$,
indexed by non-negative parameters $s$ and $t$. For a fixed $s$,
we assume that the smooth boundary loops $\psi_{s,t}(\T)$ 
foliate a domain $\calE_s:=\bigcup_{0\le t \le \delta}\psi_{s,t}(\T)$ 
(below we will use $\delta=\delta_s=s|\log s|^2$). 
We think of a foliation as a simple cover of a set.
We consider sets $\calE_s\subset\D$ located near the boundary $\T$, 
and define the {\em flow density} $\flowdens_{s,t}$ by
\be\label{eq:def-flow-density}
\flowdens_{s,t}(\zeta):=|f_s\circ\psi_{s,t}|^2 \\
|\psi_{s,t}|^{2/s}\Omega\circ\psi_{s,t}
\Re(-\bar{\zeta}\partial_t\psi_{s,t}
\overline{\psi_{s,t}'}),\qquad \zeta\in\T.
\ee
While somewhat daunting, this expression comes about naturally 
through the following disintegration identity:
\be\label{eq:disint-varpi}
\int_{\calE_s}q(z)\overline{F_N(z)}\omega(z)\diffA
=\int_{0}^{\delta}\int_\T h_{s}(\psi_{s,t}(\zeta))\flowdens_{s,t}(\zeta)
\diffs(\zeta)\diff t,
\ee
where the function $q=\Lamop_N[h_{s}]$ (recall $s=N^{-1}$).
In particular, we have that $h_s$ is a bounded holomorphic function
in a neighborhood of $\C\setminus\D$, and whenever $|q(z)|=\ordo(|z|^{N})$ 
we have $h_s(\infty)=0$.
If the flow density $\flowdens_{s,t}(\zeta)$ would be
constant as a function of $\zeta\in\T$,
then the integral in \eqref{eq:disint-varpi} 
vanishes by the mean value
property of harmonic functions.
The formula \eqref{eq:disint-varpi} is a 
consequence of the quasiconformal change-of-variables
\[
\Psi_s(z)=\psi_{s,1-|z|}(z/|z|).
\]
The factor
$\Re(-\bar\zeta\partial_t\psi_{s,t}\overline{\psi'_{s,t}})$
is a constant multiple of the Jacobian of this transform.
For the necessary details we refer to 
\cite[Subsection~3.4]{HW-ONP}.

We may not be able solve the equation
$\flowdens_{s,t}(\zeta)\equiv \text{const}(s,t)$ exactly,
but we look for a solution in an approximate sense. 
We consider a family of conformal mappings $\psi_{s,t}$, with an
asymptotic expansion jointly in $s$ and $t$ given by the ansatz
\be\label{eq:exp-psi-alt}
\psi_{s,t}(\zeta)=\e^{-t}\zeta\,
\exp\Big(t\sum_{j=1}^{\varkappa+1}s^{j}\eta_{j,t}(\zeta)\Big),
\ee
with $\eta_{j,t}$ bounded and holomorphic in a 
neighborhood of $\overline{\D}_\e$ and real at infinity,
so that the following initial condition at $t=0$ is met:
\be\label{eq:init-cond-fol}
\psi_{s,0}(\zeta)=\zeta,\qquad \zeta\in\D_\e.
\ee
It is clear from \eqref{eq:exp-psi-alt} that the mappings $\psi_{s,t}$
are small perturbations of the identity mapping.
These perturbations $\psi_{s,t}$ should be orthostatic conformal mappings 
of the closed exterior disk (cf.\ \cite[Lemma~6.2.5]{HW-ONP})
with $\psi_{s,t}(\T)\subset \D$ for small positive $s$ and $t$. 
In view of \eqref{eq:exp-psi-alt}
this holds provided that the coefficient functions $\eta_{j,t}$
are bounded in a neighborhood of $\overline{\D}_\e$ and depend smoothly on $t$.
We also look for bounded holomorphic functions 
$f_{s}$ of the form \eqref{eq:def-fs}
which extend holomorphically to a neighborhood of $\overline{\D}_\e$.
These two families of functions $\psi_{s,t}$ and $f_s$ should be chosen
such that the {\em approximate flow equation}
\be\label{eq:flow-eq-log}
\Pi_{s,t}(\zeta):=\log\flowdens_{s,t}(\zeta)
+\log{s}+s^{-1}t = \Ordo(s^{\varkappa+1}),  
\qquad \zeta\in\T,\;0\le t\le\delta_s
\ee
is met as $s\to0$. We will that there is a choice of the coefficient
functions such that $\Pi_{s,t}$ is smooth in $s$ and $t$, while
\be\label{eq:flow-eq-log-diff}
\partial^j_s\Pi_{s,t}(\zeta)\vert_{s=0}=0\qquad \text{for }j=0,\ldots,\varkappa,
\ee
and then apply Taylor's formula to obtain \eqref{eq:flow-eq-log}.

\begin{lem}\label{lem:flow}
Given $\varkappa\ge0$, there exists a number $0<\rho<1$, 
bounded holomorphic functions 
$f_s$ of the form \eqref{eq:def-fs}
on $\D_\e(0,\rho)$ and
orthostatic conformal mappings $\psi_{s,t}$ of the forms 
\eqref{eq:exp-psi-alt} which extend holomorphically to 
$\D_\e(0,\rho)$ and univalently to some $\D_\e(0,\rho_\varkappa)$
with $\rho\le \rho_\varkappa<1$,
such that the flow equation \eqref{eq:flow-eq-log} holds.
\end{lem}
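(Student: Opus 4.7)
The strategy is to take the functions $X_j$ as supplied by Theorem~\ref{thm:coeff} and to determine the $\eta_{j,t}$ order-by-order in $s$ so that the Taylor coefficients $\partial_s^j\Pi_{s,t}(\zeta)\vert_{s=0}$ vanish for $j=0,1,\ldots,\varkappa$ uniformly on $\T\times[0,\delta_s]$. Once this is achieved and joint smoothness of $\Pi_{s,t}$ in $(s,t)$ up to order $\varkappa+1$ is verified, Taylor's theorem in $s$ with remainder delivers the bound \eqref{eq:flow-eq-log}.

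First I would substitute the ansatz \eqref{eq:exp-psi-alt} and \eqref{eq:def-fs} into \eqref{eq:def-flow-density} and take logarithms. The $-\log s$ from $\log|f_s|^2$ cancels the explicit $\log s$ in the definition of $\Pi_{s,t}$, while the potentially singular $t/s$-type contribution from $(2/s)\log|\psi_{s,t}|$ on $\T$ combines with the counter-term $s^{-1}t$ and with the logarithm of $\Re(-\bar\zeta\partial_t\psi_{s,t}\overline{\psi'_{s,t}})$; one checks from the ansatz \eqref{eq:exp-psi-alt} that these singular parts cancel. After these cancellations $\Pi_{s,t}$ extends to a real-analytic function of $s$ near $s=0$, with coefficients depending smoothly on $t$ and real-analytically on $\zeta\in\T$.

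The main step is the recursive solution of $\partial_s^j\Pi_{s,t}\vert_{s=0}=0$. Setting the $s^j$-coefficient equal to zero on $\T$ yields, after substituting lower-order data, an equation schematically of the form
\[
2t\,\Re\eta_{j+1,t}(\zeta) \;+\; 2\,\Re X_j(\zeta) \;=\; \mathcal{R}_{j,t}(\zeta),\qquad \zeta\in\T,
\]
where the right-hand side is an explicit real-valued function built from $\Omega$ and the previously determined $X_l$, $\eta_{l,t}$ for $l\le j$. Because $X_j$ is holomorphic on $\D_\e$ vanishing at infinity and $\eta_{j+1,t}$ is required holomorphic on $\overline{\D}_\e$ and real at infinity, solvability requires that $\mathcal{R}_{j,t}$ decompose as the sum of the boundary trace of the real part of an $H^\infty(\D)$-function and the real part of an $H^\infty(\D_\e)$-function, modulo a real constant. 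This is precisely the content of the scalar Riemann-Hilbert hierarchy \eqref{eq:T-ker}, whose solvability is guaranteed by Theorem~\ref{thm:coeff}. Invoking it, the Schwarz reflection principle produces a unique bounded holomorphic $\eta_{j+1,t}$ on $\overline{\D}_\e$, real at infinity, solving the equation; the residual real constant is absorbed into the next coefficient of $D_s$. Induction on $j$ finishes the construction.

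The main obstacle is the bookkeeping: one must verify at each order that the algebraic structure of the $s^j$-coefficient on $\T$ matches exactly the splitting required by the Riemann-Hilbert hierarchy, so that the Schwarz problem is solvable. Two subsidiary technical points are: (i) the $\eta_{j,t}$ extend holomorphically to a common neighborhood $\D_\e(0,\rho)$ independent of $j\le\varkappa+1$, which follows from the real-analyticity of $\Omega$ across $\T$ and from the fact that Schwarz reflection preserves analytic domains; and (ii) univalence of $\psi_{s,t}$ on a slightly larger neighborhood $\D_\e(0,\rho_\varkappa)$, which follows from the inverse function theorem since, for $0\le t\le\delta_s=s|\log s|^2$, the map $\psi_{s,t}$ is a $C^1$-small perturbation of the orthostatic $\zeta\mapsto \e^{-t}\zeta$ when $s$ is small.
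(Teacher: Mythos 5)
There is a genuine gap, and it is both logical and conceptual. You plan to ``take the functions $X_j$ as supplied by Theorem~\ref{thm:coeff}'' and then determine only the $\eta_{j,t}$. But Theorem~\ref{thm:coeff} is logically downstream of Lemma~\ref{lem:flow}: it is derived from the collapsed orthogonality conditions, which in turn rest on the $L^2$-expansion Theorem~\ref{thm:asymp-exp-L2}, which is proved using Lemma~\ref{lem:flow}. Invoking Theorem~\ref{thm:coeff} here is circular. Moreover, it misreads what Lemma~\ref{lem:flow} actually accomplishes: the flow equation \emph{produces} the functions $X_j$ (equivalently $\nu_j$, through $g_s=\log(s|f_s|^2)$), it does not consume them as input. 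Concretely, in the equation you write, $2t\,\Re\eta_{j+1,t} + 2\,\Re X_j = \mathcal{R}_{j,t}$, evaluating at $t=0$ kills the $\eta$-term and \emph{forces} $\Re X_j\vert_\T=\tfrac12\mathcal{R}_{j,0}$. There is no freedom to prescribe $X_j$ independently. The paper's two-step loop exploits exactly this: Step~3 reads off $\nu_{j_0}$ from the $t=0$ slice of $\partial_s^{j_0}\Pi_{s,t}\vert_{s=0}=0$ via the function $\mathfrak{G}_{j_0}$ in \eqref{eq:v-j0-equal}--\eqref{eq:nu-j0-def}, and only then does Step~4 divide out the factor $2t$ (which is now legitimate, since the $t=0$ value has been annihilated) and read off $\eta_{j_0+1,t}$ from $\mathfrak{F}_{j_0,t}$ in \eqref{eq:eta-j0-def}.

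Related to this, the ``solvability condition'' you invoke is spurious. You claim that solvability requires $\mathcal{R}_{j,t}$ to decompose as the sum of the real part of an $H^\infty(\D)$-function and the real part of an $H^\infty(\D_\e)$-function, and that this is the content of the Riemann-Hilbert hierarchy \eqref{eq:T-ker}. But the problem of finding a bounded holomorphic function on $\D_\e$, real at infinity, whose real part restricts to a prescribed real-analytic function on $\T$ is \emph{always} solvable by the Herglotz transform $\Hop_{\D_\e}$ of \eqref{eq:Herglotz}; there is no obstruction and no decomposition to check. You are conflating the Schwarz/Herglotz problem on $\T$ (which arises inside the flow algorithm and is unconditionally solvable) with the jump problem \eqref{eq:T-ker} across $\T$ (which arises in the separate derivation in Section~\ref{s:Toeplitz} from the collapsed orthogonality relations, and which is a statement about $X_j$, not a solvability hypothesis for the flow). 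Your peripheral observations --- the cancellation of the $\log s$ and $s^{-1}t$ singularities, the role of Taylor's theorem in $s$, the uniformity of the analyticity radius via polarization, and the univalence of $\psi_{s,t}$ for small $s$ --- are all consistent with the paper, but the central recursion as you describe it does not close.
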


The proof of this lemma is carried out in 
Subsection~\ref{ss:reduct-flow} and Subsection~\ref{ss:alg} below.

\subsection{Preliminary simplification of the flow equation}
\label{ss:reduct-flow}
It is advantageous to work with $g_s=\log(s|f_s|^2)
=2\Re\hspace{-1.5pt}\log f_s+\log s$, 
which should have an asymptotic expansion 
\be\label{eq:gs-def}
g_s=\sum_{j=0}^\varkappa s^{-j}\nu_j
\ee
for bounded harmonic $\nu_j$. We recall the function $\Omega$ which was 
defined previously in \eqref{eq:def-mod-weight} and put
\be\label{eq:U-def}
U=\log\Omega=2\Re V\circ\varphi^{-1}+\log\omega\circ\varphi^{-1}.
\ee
By the defining property
of the Szeg\H{o} function $V$, the function $U$ vanishes identically on 
the unit circle.
In terms of the functions $g_s$ and $U$ defined in \eqref{eq:gs-def} 
and \eqref{eq:U-def}, respectively, the {\em logarithmic flow density} 
$\Pi_{s,t}$ may be rewritten as
\begin{multline}\label{eq:flow-density-log}
\Pi_{s,t}(\zeta)= 2t\sum_{j=0}^{\varkappa}s^j\Re \eta_{j+1}(\zeta)
+\sum_{j=1}^{\varkappa}s^j\nu_j\circ\psi_{s,t}(\zeta) 
\\
+ U\circ\psi_{s,t}(\zeta) 
+ \log\Re(-\bar{\zeta}\partial_t\psi_{s,t}(\zeta)
\overline{\psi'_{s,t}(\zeta)}).
\end{multline}
We recall that the flow equation is given by \eqref{eq:flow-eq-log}.

\bigskip
\subsection{Algorithmic solution of the flow equation}
\label{ss:alg}
The coefficients
$\nu_j$ and $\eta_{j,t}$
are found in an algorithmic fashion as follows.

\medskip
\noindent {\bf \em Step 1.\;} 
The formula for $\Pi_{s,t}$ evaluated at $s=0$ reads
\be
\Pi_{s,t}\vert_{s=0}
=  2t\Re\eta_{1,t} +\nu_0\circ\psi_{0,t}
+ U\circ\psi_{0,t}-t,
\ee
where $\psi_{0,t}(\zeta)=\e^{-t}\zeta$.
Choose now $\nu_0\equiv 0$, so that
\be\label{eq:form-Pi1}
\Pi_{s,t}(\zeta)\vert_{s=0}=2t\Re \eta_{1,t} + U\circ\psi_{0,t}-t.
\ee

\smallskip

\noindent {\bf \em Step 2.\;} 
Since $U\vert_{\T}=0$
and $\psi_{0,t}(\zeta)=\e^{-t}\zeta$,
the function
\be
\mathfrak{F}_{0,t}(\zeta)
=\tfrac{1}{2t}\big(U\circ\psi_{0,t}-t\big)
=\tfrac{1}{2t}U\circ\psi_{0,t}-\tfrac12
\ee
extends to a real-analytic function of $(\zeta,t)$ 
in $\A(\rho)\times[0,t_1]$ for some
parameters $0<\rho<1$ and $t_1$,
where $\A(\rho)$ is the annulus
\be
\A(\rho):=\big\{z\in\C:\rho<|z|<\tfrac{1}{\rho}\big\}.
\ee
We define the function 
$\eta_{1,t}$ for $0\le t\le r_1$ by the modified flow
\be
\Pi_{s,t}(\zeta)\vert_{\zeta\in\T,\,s=0}= 0,
\ee
which by \eqref{eq:form-Pi1} is equivalent to
\be
\Re\eta_{1,t}(\zeta)=-\mathfrak{F}_{0,t}(\zeta),
\qquad \zeta\in\T.
\ee
In view of the real-analyticity of $\mathfrak{F}_{0,t}(\zeta)$
found in Step 1, this equation may be solved by
(see e.g.\ \cite[Subsection 2.5]{HW-ONP})
\be
\eta_{1,t}=-\Hop_{\D_\e}[\mathfrak{F}_{0,t}],
\ee 
where $\Hop_{\D_\e}$ is the usual {\em Herglotz transform}
\be\label{eq:Herglotz}
\Hop_{\D_\e}f(z)=\int_{\T}\frac{z+\zeta}{z-\zeta}f(\zeta)\diffs(\zeta),\qquad
z\in\D_\e.
\ee

We now enter the iterative
Steps 3 and 4 of the algorithm, which will continue to loop until all the remaining
coefficient functions have been found and the approximate flow equation
\eqref{eq:flow-eq-log} has been solved. 
\medskip

\noindent {\bf \em Step 3.\;} 
We enter this step with $j=j_0$ with $1\le j_0\le \varkappa$, and assume that
there exist bounded holomorphic functions
$\eta_{j,t}(\zeta)$ for $1\le j\le j_0$ and bounded harmonic
functions $\nu_j$ for $j<j_0$
such that the following equations hold:
\be
\partial_s^{j}\Pi_{s,t}(\zeta)\vert_{\zeta\in\T,\,s=0}\equiv 0,
\qquad 0\le j<j_0,
\ee
where $\Pi_{s,t}$ is given by \eqref{eq:flow-density-log}.
Here, the functions $\eta_{j,t}(\zeta)$ are required to be bounded and 
holomorphic on $\D_\e(0,\rho)$ and real-analytically smooth in 
$t$ for $t\in [0,t_j]$ for some $t_j>0$,
while the functions $\nu_j(\zeta)$ are bounded and harmonic on $\D_\e(0,\rho)$
(see the argument in the proof of Lemma~\ref{lem:flow} below
for a comment on the uniformity of these parameters). 

The goal of Steps 3 and 4 taken together is to solve the equation
\be\label{eq:Pi-vanish-j-zero}
\partial_s^{j_0}\Pi_{s,t}\big\vert_{\T,s=0}=0,
\ee
by determining suitable functions $\nu_{j_0}$ and $\eta_{j_0+1,t}$. 
To see what is required, we differentiate  in
\eqref{eq:flow-density-log} and obtain
\begin{multline}\label{eq:j_0-diff-Pi}
\partial_s^{j_0}\Pi_{s,t}\big\vert_{\T,s=0}
=2t j_0!\Re \eta_{j_0+1,t}\\
+\partial_s^{j_0}\Big(U\circ\psi_{s,t} +
\log\Re(-\zeta\partial_t\psi_{s,t}
\overline{\psi_{s,t}'}\Big)\Big\vert_{\T,\,s=0}
+\sum_{j=0}^{j_0}\partial_s^{j_0}
\big(s^j\nu_j\circ\psi_{s,t}\big)\Big\vert_{\T, s=0},
\end{multline}
where the sum is truncated at $j_0$ due to the higher order vanishing
of the remaining terms.
In particular, the equation \eqref{eq:Pi-vanish-j-zero} should hold for $t=0$,
which in view of the above equation entails that 
\begin{multline}\label{eq:v-j0-equal}
\nu_{j_0}\vert_{\T}
=-\frac{1}{j_0!}\partial_s^{j_0}\Big(U\circ\psi_{s,t} +
\log\Re(-\zeta\partial_t\psi_{s,t}
\overline{\psi_{s,t}'}\Big)\Big\vert_{\T,\,s=t=0} 
\\
- \frac{1}{j_0!}\sum_{j=0}^{j_0-1}\partial_s^{j_0}
\big(s^j\nu_j\circ\psi_{s,t}\big)
\Big\vert_{\T, s=t=0}=:\mathfrak{G}_{j_0}.
\end{multline}
The function $\mathfrak{G}_{j_0}$ is real-valued and 
real-analytically smooth on $\T$, and 
given in terms of the already known data set 
($\eta_{j+1,t}$ and $\nu_j$ for $0\le j\le j_0-1$), 
so that in terms of the Herglotz kernel for
the exterior disk we have
\be\label{eq:nu-j0-def}
\nu_{j_0}:=\Re\Hop_{\D_\e}\big[\mathfrak{G}_{j_0}\big].
\ee

\medskip

\noindent {\bf \em Step 4.\;} 
In view of \eqref{eq:nu-j0-def}, the equation 
\eqref{eq:Pi-vanish-j-zero} holds for $t=0$, 
which allows us to define a real-analytic function 
of $(\zeta,t)\in\A(\rho)\times [0,t_{j_0}]$ for some $t_{j_0}>0$ by
\be
\mathfrak{F}_{j_0,t}(\zeta)=
\frac{1}{2t}\Big\{\partial_s^{j_0}\Big(U\circ\psi_{s,t} +
\log\Re(-\zeta\partial_t\psi_{s,t}
\overline{\psi_{s,t}'}\Big)
+\sum_{j=0}^{j_0}\partial_s^{j_0}\big(s^j\nu_j\circ\psi_{s,t}\big)\Big\}
\Big\vert_{s=0},
\ee
where we note that $\mathfrak{F}_{j_0,t}$ is an expression 
in terms of the data set 
\be
\{\eta_{j,t}, \nu_{j}: 1\le j\le j_0\}
\ee
if we agree that $\eta_{0,t}=0$.

Returning to the equation \eqref{eq:Pi-vanish-j-zero}, we see that
it asserts that
\be
2tj_0!\Re \eta_{j_0+1,t}(\zeta)+2t\mathfrak{F}_{j_0,t}(\zeta) =0,\qquad \zeta\in\T.
\ee
We solve this equation with the help of the Herglotz transform:
\be\label{eq:eta-j0-def}
\eta_{j_0+1,t}:=-(j_0!)^{-1}\Hop_{\D_\e}\big[\mathfrak{F}_{j_0,t}\big].
\ee
In summary, Steps 3 and 4 provide us with the 
functions $\nu_{j_0}$ and $\eta_{j_0+1,t}$ 
given by \eqref{eq:nu-j0-def} and 
\eqref{eq:eta-j0-def}, respectively, in such a way that
the equation \eqref{eq:Pi-vanish-j-zero} holds. 
This puts us in a position to return back
to Step 3 with $j_0$ increased to $j_0+1$.

\medskip

\begin{proof}[Proof of Lemma~\ref{lem:flow}]
The above described algorithm supplies us with the 
coefficient functions $\nu_j$ and $\eta_{j+1,t}$ for $0\le j\le \varkappa$.
The functions $g_s$ and $\psi_{s,t}$ are then obtained by the equations
\eqref{eq:gs-def} and \eqref{eq:exp-psi-alt}, 
respectively. Finally, we obtain the function $f_s$ by 
the formula
\be
f_s =s^{-\frac12}\exp\bigg(\sum_{j=1}^{\varkappa}s^j 
\Hop_{\D_\e}\big[\nu_j\vert_{\T}]\bigg)=D_s s^{-\frac12}\exp\bigg(
\sum_{j=1}^{\varkappa}s^j\Hop_{\D_\e}\big[\nu_j\vert_{\T}-\nu_j(\infty)]\bigg),
\label{eq:fs-form-2}
\ee
where the positive constant $D_s$ is given by
\be
D_s=\exp\bigg(\sum_{j=1}^{\varkappa} s^{j}\nu_j(\infty)\bigg).
\ee
It is clear from \eqref{eq:fs-form-2} that $f_s$
admits an asymptotic expansion of the form
\eqref{eq:def-fs}, which implicitly defines the 
coefficients $X_j$. We have now established \eqref{eq:flow-eq-log-diff}
which gives \eqref{eq:flow-eq-log} by Taylor's formula.

It remains to explain why the radius 
$\rho$ with $0<\rho<1$ may be chosen
independently of $\varkappa$. 
Since the modified weight $\Omega$ is real-analytic in neighborhood
of the circle $\T$, it admits a polarization $\Omega(z,w)$
which is holomorphic in $(z,\bar{w})$ for $(z,w)$ 
in the {\em $2\sigma$-fattened diagonal annulus}
\be
\widehat{\A}(\rho,\sigma)=\{(z,w)\in\C^2: (z,w)\in\A(\rho)\times 
\A(\rho)\text{ and }|z-w|< 2\sigma\}.
\ee
Here, $\sigma$ is a strictly positive parameter and $0<\rho<1$, and
$\rho$ is 
chosen so that $\rho\ge (\sqrt{1+\sigma^2}+\sigma)^{-1}$.
More generally, if $\mathfrak{F}$ is a real-analytic function
which also polarizes to $\widehat{\A}(\rho,\sigma)$,
then the restriction $\mathfrak{F}\vert_{\T}$ has a Laurent 
series which is convergent in $\A(\rho)$. It follows from this that
the Herglotz transform $\Hop_{\D_\e}[\mathfrak{F}\vert_\T]$
represents a holomorphic function in the exterior disk $\D_\e(0,\rho)$.
In particular, the radius $\rho$ is preserved in the above
presented iteration procedure.
For more details see
\cite[Subsections~6.1, 6.3 and 6.12]{HW-ONP}.
\end{proof}

\section{Existence of asymptotic expansions}
\subsection{A preliminary estimate}
By applying Lemma~\ref{lem:flow} with $s=N^{-1}$, 
we obtain functions $f_s$ 
and orthostatic conformal maps $\psi_{s,t}$, all holomorphic on
$\D_\e(0,\rho)$ for some radius $\rho$ with $0<\rho<1$. By slight abuse of
notation, we denote these functions by $f_N$ and $\psi_{N,t}$, respectively. 
We put
\be\label{eq:def-FN-pf-sect}
F_N=\Lamop_N[f_N]=\varphi'\varphi^N\e^V f_N\circ\varphi,
\ee
and for each holomorphic function $q$ on 
$\C\setminus\scrD_\rho$ of polynomial growth
$|q(z)|=\ordo(|z|^{N})$, we define the function $h_N$ by
\be
h_N=\Lamop_N^{-1}[q]
\ee
and denote by $\calE_N=\cup_{0\le t\le \delta_N}\psi_{N,t}(\T)$ the 
region covered by the flow up to time 
$\delta_N:=N^{-1}(\log N)^2$. 
Then $h_N$ is bounded and holomorphic in $\D_\e(0,\rho)$ 
with $h_N(\infty)=0$, and we have that
\be\label{eq:form-varpi-Nt}
\flowdens_{N,t}(\zeta)=N\e^{-Nt}(1+\Ordo(N^{-\varkappa-1})),
\qquad 0\le t\le \delta_N,\;\zeta\in\T,
\ee
where $\flowdens_{N,t}=\flowdens_{s,t}$ is the flow density defined
in \eqref{eq:def-flow-density}.
As a consequence, we obtain the following estimate.

We recall that $\chi_0$ is a smooth cut-off function
with $\chi_0= 1$ on a neighborhood of $\C\setminus\scrD$
and $\chi_0=0$ on a neighborhood of $\overline{\scrD}_\rho$.

\begin{prop}\label{prop:flow-conseq}
For $q$ holomorphic on $\C\setminus \scrD_\rho$ with $|q(z)|=
\ordo(|z|^{N})$ as $|z|\to+\infty$, we have that 
$\lVert \chi_0 F_N\rVert_{L^2(\scrD,\omega)}^2=1+\Ordo(N^{-\varkappa-1})$, 
and
\be\label{eq:flow-conseq}
\int_{\scrD}\chi_0^2(z)q(z)\overline{F_N(z)}\omega(z)\diffA(w)
=\Ordo\big(N^{-\varkappa-1}\lVert \chi_0 q\rVert_{L^2(\scrD,\omega)}\big).
\ee
\end{prop}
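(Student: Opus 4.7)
The plan is to split the integration region as $\scrD = \calE_N \cup (\scrD \setminus \calE_N)$ and to apply the disintegration formula \eqref{eq:disint-varpi} on the flow region $\calE_N$, while bounding the tail on $\scrD \setminus \calE_N$ by super-polynomial decay. Since $\calE_N$ is a narrow strip near $\partial\scrD$ of thickness $\delta_N = N^{-1}(\log N)^2$, for sufficiently large $N$ we have $\chi_0 \equiv 1$ on $\calE_N$. On the complement, the bound $|\varphi(z)| \le 1 - c\delta_N$ yields $|F_N(z)|^2 \lesssim |\varphi(z)|^{2N} \le \e^{-c(\log N)^2}$, so that $\|\chi_0 F_N 1_{\scrD \setminus \calE_N}\|_{L^2(\scrD,\omega)}$ is smaller than any negative power of $N$; by Cauchy--Schwarz this contributes $\Ordo(N^{-M}\|\chi_0 q\|_{L^2(\scrD,\omega)})$ for any $M > 0$ to the orthogonality integral.

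For the norm estimate I would apply \eqref{eq:disint-varpi} with $q = F_N$, so that $h_N = f_N$. Using the approximation \eqref{eq:form-varpi-Nt}, the flow-region contribution reads
\[
\int_0^{\delta_N}\!\!\int_\T \flowdens_{N,t}\diffs\,\diff t = \int_0^{\delta_N}\!\! N\e^{-Nt}\bigl(1+\Ordo(N^{-\varkappa-1})\bigr)\diff t = 1+\Ordo(N^{-\varkappa-1}),
\]
as $\e^{-N\delta_N} = \e^{-(\log N)^2}$ is super-polynomially small. Combined with the negligible tail, this yields $\|\chi_0 F_N\|^2_{L^2(\scrD,\omega)} = 1+\Ordo(N^{-\varkappa-1})$.

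For the orthogonality estimate I would again apply \eqref{eq:disint-varpi} and substitute $\flowdens_{N,t}=N\e^{-Nt}(1+R_{N,t})$ with $|R_{N,t}|=\Ordo(N^{-\varkappa-1})$. The main term vanishes: the hypothesis $|q(z)|=\ordo(|z|^N)$ and the asymptotic $|\Lamop_N h_N(z)| \asymp |z^N h_N(\varphi(z))|$ as $z\to\infty$ force $h_N(\infty)=0$, and as $\psi_{N,t}$ is orthostatic, $h_N \circ \psi_{N,t}$ is holomorphic on $\D_\e$ with $(h_N\circ\psi_{N,t})(\infty) = 0$, whence $\int_\T h_N\circ\psi_{N,t}\diffs = 0$ by the mean value property. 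The remainder $\int_0^{\delta_N}\!\int_\T h_N\circ\psi_{N,t}\cdot N\e^{-Nt}R_{N,t}\diffs\,\diff t$ would be controlled by Cauchy--Schwarz in $(\zeta,t)$, using the companion identity $\int_0^{\delta_N}\!\int_\T |h_N\circ\psi_{N,t}|^2\e^{-Nt}\diffs\,\diff t \asymp \|\chi_0 q\|^2_{L^2(\scrD,\omega)}$; the latter follows from the isometry of $\Lamop_N$ together with the factorization $|\psi_{N,t}|^{2N}\Omega\circ\psi_{N,t}\Re(-\bar\zeta\partial_t\psi_{N,t}\overline{\psi'_{N,t}}) = \flowdens_{N,t}/|f_N\circ\psi_{N,t}|^2 \asymp \e^{-Nt}$, valid because $|f_N|^2 \asymp N$ on the flow curves.

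The main obstacle is attaining the sharp $N^{-\varkappa-1}\|\chi_0 q\|$ bound in the remainder rather than the $N^{-\varkappa-1/2}\|\chi_0 q\|$ that a direct Cauchy--Schwarz produces; the half-power loss arises from the mismatch between the weight $N\e^{-Nt}$ in the error and the natural $L^2$-weight $\e^{-Nt}$ on the foliation. To close this gap, I would invoke Lemma~\ref{lem:flow} at level $\varkappa+1$, obtaining an improved quasipolynomial $\tilde F_N$ with $\|\tilde F_N - F_N\|_{L^2(\scrD,\omega)} = \Ordo(N^{-\varkappa-1})$ for which the analogous estimate yields $\Ordo(N^{-\varkappa-3/2}\|\chi_0 q\|)$; an additional Cauchy--Schwarz absorbs the difference $\tilde F_N - F_N$ against $\chi_0 q$ to recover the claimed $\Ordo(N^{-\varkappa-1}\|\chi_0 q\|)$ bound for $F_N$ itself.
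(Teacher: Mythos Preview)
Your overall architecture matches the paper's proof: split $\scrD$ into the flow region $\varphi^{-1}(\calE_N)$ and its complement, handle the norm via \eqref{eq:form-varpi-Nt}, kill the leading orthogonality term by the mean value property, and control the tail by Cauchy--Schwarz together with the smallness of $\lVert \chi_0 F_N\rVert_{L^2(\scrD\setminus\varphi^{-1}(\calE_N),\omega)}$. Where you diverge is in the treatment of the remainder on the flow region, and there the paper avoids your half-power loss entirely rather than repairing it.

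In the paper, the disintegration is written with the integrand $(h_N/f_N)\circ\psi_{N,t}\cdot\flowdens_{N,t}$ (the statement of \eqref{eq:disint-varpi} drops the $1/f_N$, but the proof uses the correct form). After the main term vanishes, the remainder
\[
N^{-\varkappa}\int_{0}^{\delta_N}\!\int_\T \Big|\tfrac{h_N}{f_N}\circ\psi_{N,t}\Big|\,\e^{-Nt}\diffs\,\diff t
\]
is not bounded by Cauchy--Schwarz in the foliation variables; instead the paper uses the pointwise inequality $\e^{-Nt}\le 2N^{-1}\flowdens_{N,t}$ to convert the foliation integral back into an \emph{area} integral, obtaining $2N^{-\varkappa-1}\int_{\calE_N}|h_N f_N|\,r_N\Omega\,\diffA$, and only then applies Cauchy--Schwarz in $L^2(\calE_N,r_N\Omega)$ against the already-established bound $\lVert f_N\rVert\lesssim 1$. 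This reaches $N^{-\varkappa-1}\lVert \chi_0 q\rVert$ in a single stroke.

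Your route---Cauchy--Schwarz in $(\zeta,t)$ followed by invoking Lemma~\ref{lem:flow} at level $\varkappa+1$ and comparing $\tilde F_N$ with $F_N$---is valid: the recursion for the $X_j$ and $\eta_{j,t}$ is triangular, so the first $\varkappa$ coefficients are unchanged, and the bound $\lVert \tilde F_N-F_N\rVert_{L^2(\scrD,\omega)}=\Ordo(N^{-\varkappa-1})$ does hold. But it is more circuitous, and it is worth noting that the half-power loss you observe stems precisely from working with $h_N\circ\psi_{N,t}$ rather than $(h_N/f_N)\circ\psi_{N,t}$: with the latter, your own companion identity picks up an extra factor $|f_N|^{-2}\asymp N^{-1}$, and the same Cauchy--Schwarz already yields the sharp $N^{-\varkappa-1}$.
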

\begin{proof}
To compute the norm if $F_N$, we use the disintegration formula
\eqref{eq:disint-varpi} to obtain
\begin{multline}
\lVert \chi_0 F_N\rVert_{L^2(\scrD,\omega)}^2
=\int_{\scrD}\chi_0^2(z)|F_N(z)|^2\omega(w)\diffA(z)
\\
=\int_\D\chi_1^2(w)|f_N(w)|^2|w|^{2N}\Omega(w)\diffA(w)
\\=\int_{0}^{\delta_N}\int_\T\flowdens_{N,t}(\zeta)\diffs(\zeta)\diff t + 
\int_{\D\setminus\calE_N}\chi_1^2(w)|f_N(w)|^2|w|^{2N}\Omega(w)\diffA(w),
\end{multline}
where we recall that $\chi_1=\chi_0\circ\varphi^{-1}$, so that
$\chi_1=1$ on $\calE_N$ provided that $N$ is large enough.
Since $|f_N|^2=\Ordo(N)$ on $\mathrm{supp}(\chi_1)$ it is clear that 
the integral over $\D\setminus\calE_N$ is, e.g., of order $\Ordo(N^{-2\varkappa-2})$.
In addition, in view of \eqref{eq:form-varpi-Nt}, it follows that the integral
of the flow density $\flowdens_{N,t}$ equals $1+\Ordo(N^{-\varkappa-1})$.
This shows that the norm of $\chi_0 F_N$ has the claimed asymptotics.

Turning to the orthogonality condition, 
we split the integral according to
\be\label{eq:split-flow-conseq}
\int_{\scrD}\chi_0^2q\overline{F_N}\omega\diffA
=\int_{\varphi^{-1}(\calE_N)}\chi_0^2q\overline{F_N}\omega\diffA
+\int_{\scrD\setminus\varphi^{-1}(\calE_N)}\chi_0^2q\overline{F_N}\omega\diffA
=:I_1+I_2.
\ee
We start with $I_1$, observe that $\chi_0=1$ on $\varphi^{-1}(\calE_N)$, and use 
the disintegration formula \eqref{eq:disint-varpi} to obtain
\begin{multline}
\label{eq:approx-orth-1}
\int_{\varphi^{-1}(\calE_N)}\chi_0^2 q \overline{F_N}\omega\diffA
=\int_{\calE_N} h_N(w)\overline{f_N(w)}|w|^{2N}\Omega(w)\diffA(w)
\\=
\int_{0}^{\delta_N}
\int_\T \frac{h_N}{f_N}
\circ\psi_{N,t}(\zeta)
\flowdens_{N,t}(\zeta)\diffs(\zeta)\diff t\\
=N\int_{0}^{\delta_N}\int_\T
\frac{h_N}{f_N}\circ\psi_{N,t}(\zeta)
\Big(1+\Ordo(N^{-\varkappa-1})\Big)\diffs(\zeta)\e^{-Nt}\diff t\\
=\Ordo\Big(N^{-\varkappa}\int_{0}^{\delta_N}\int_\T
\frac{h_N}{f_N}\circ\psi_{N,t}(\zeta)
\diffs(\zeta)\e^{-Nt}\diff t\Big).
\end{multline}
In the last step, we use the mean value 
property for analytic functions
and the fact that $h_N(\infty)=0$.
Next, we observe that
$\e^{-Nt} \le 2N^{-1}\flowdens_{s,t}(\zeta)$ for large $N$,
so that
\begin{multline}
N^{-\varkappa}\int_{0}^{\delta_N}\int_\T
\frac{h_N}{f_N}\circ\psi_{N,t}
\diffs\,\e^{-Nt}\diff t\le
2N^{-\varkappa-1}\int_{\calE_N}|h_N\,f_N|\,r_N\Omega\,\diffA
\\=\Ordo\big(N^{-\varkappa-1}\lVert \chi_0 q\rVert_{L^2(\scrD,\omega)}\big),
\end{multline}
using the isometric property of $\Lamop_N$ and the Cauchy-Schwarz inequality,
where we recall the notation $r_N(w)=|w|^{2N}$.
In addition, if we write $\mathcal{F}_N=\varphi^{-1}(\calE_N)$,
we see that
\begin{multline}
\int_{\scrD\setminus\mathcal{F}_N} \chi_0^2 |q\,F_N|\omega\diffA
\le \lVert \chi_0 F_N\rVert_{L^2(\scrD\setminus\mathcal{F}_N,\omega)}
\lVert \chi_0 q\rVert_{L^2(\scrD\setminus\mathcal{F}_N,\omega)}
\\=\Ordo\Big(N^{-\varkappa-1}\lVert \chi_0 q\rVert_{L^2(\scrD,\omega)}\Big),
\end{multline}
which holds by Cauchy-Schwarz inequality and the fact that 
\be
\lVert \chi_0 F_N\rVert_{L^2(\scrD\setminus\mathcal{F}_N,\omega)}=\Ordo(N^{-\varkappa-1})
\ee 
as shown previously in connection with the analysis 
of $\lVert \chi_0 F_N\rVert_{L^2(\scrD,\omega)}^2$.
\end{proof}

\subsection{Polynomialization and the $\bar\partial$-estimate} 
\label{ref:d-bar}
In order to obtain Theorem~\ref{thm:asymp-exp-L2}, we need
to show that $\chi_0F_N$ is well approximated by the orthogonal polynomial
$P_N$. A first step is to show that
$\chi_0F_N$ is well approximated by elements of the space $\mathrm{Pol}_{N}$.
Since $\bar\partial(\chi_0F_N)=F_N\bar\partial\chi_0$ is supported
on a set of the form $\scrD_{\rho_1}\setminus\scrD_\rho$ with $0<\rho<\rho_1$
and since $F_N=\Lamop_N[f_N]$ for $|f_N|^2=\Ordo(N)$,
we find that
\be
\int_\scrD |F_N\bar\partial\chi_0|^2\diffA
\lesssim
N\int_{\scrD_{\rho_1}\setminus\scrD_\rho}|\varphi|^{2N}|\varphi'|^2\diffA
=N\int_{\D_{\rho_1}\setminus\D(0,\rho)}r_N\diffA\le \rho_1^{2N+2},
\ee
where we recall $r_N(z)=|z|^{2N}$ and 
use the boundedness of the gradient of $\chi_0$ and $V$.
in the first step.
We now apply H{\"o}rmander's classical $\bar\partial$-estimate 
with weight $\phi(z)=2\log(1+|z|^2)$, which tells us that the equation
$\bar\partial u=F_N\bar\partial\chi_0$ admits a solution $u$ with
\be
\int_\C|u|^2\e^{-\phi}\diffA\le 
\int_{\scrD}|F_N\bar\partial\chi_0|^2\frac{\e^{-\phi}}{\Delta \phi}
\diffA\le \int_\scrD |F_N\bar\partial\chi_0|^2\diffA
=\Ordo(\rho_1^{2N})
\ee
since $\Delta\phi=2\e^{-\phi}$ on $\scrD$.
The function $u$ is holomorphic in the exterior domain $\C\setminus\scrD$,
so the finiteness of $\lVert u\rVert_{L^2(\C,\e^{-\phi})}$ implies
that $|u(z)|=\Ordo(1)$ as $|z|\to+\infty$.
From this it follows that the function $Q_N$ given by 
$Q_N:=\chi_0 F_N-u$
is a polynomial of degree $N$, 
and that $\lVert Q_N-\chi_0 F_N\rVert_{L^2(\scrD,\omega)}=\Ordo(\rho_1^{2N})$.

\begin{proof}[Proof of Theorem~\ref{thm:asymp-exp-L2}]
We define a polynomial $P_N^*$ by 
\be\label{eq:PN-ast}
P_N^*=(\Iop-\Sop_{N})Q_N=Q_N-\Sop_{N} Q_N,
\ee
where $\Sop_N$ is the orthogonal projection of $L^2(\scrD,\omega)$
onto the space $\mathrm{Pol}_{N-1}^2(\scrD,\omega)$.
Since $Q_N$ is a polynomial of degree $N$, the polynomial 
$P_N^*$ is a constant multiple of
the orthogonal polynomial $P_N$, say $P_N^*=c_NP_N$.

Since $\lVert Q_N-\chi_0 F_N\rVert_{L^2(\scrD,\omega)}=\Ordo(\rho_1^{2N})$,
it follows from Proposition~\ref{prop:flow-conseq} that $Q_N$ has norm
$1+\Ordo(N^{-\varkappa-1})$ in $L^2(\scrD,\omega)$ and that
\be
\int_{\scrD}q(z)\overline{Q}_N(z)\omega(z)\diffA(z)
=\Ordo\big(N^{-\varkappa-1}\lVert q\rVert_{L^2(\scrD,\omega)}\big).
\ee
But by duality, we then see that 
\be
\lVert \Sop_N Q_N\rVert_{L^2(\scrD,\omega)}
=\Ordo(N^{-\varkappa-1}).
\ee
From these considerations we arrive at
\be
\lVert c_N P_N-Q_N\rVert_{L^2(\scrD,\omega)}=\lVert P_N^*-Q_N\rVert_{L^2(\scrD,\omega)}=
\lVert \Sop_NQ_N\rVert_{L^2(\scrD,\omega)}=\Ordo(N^{-\varkappa-1}).
\ee
Since $P_N$ is normalized and since 
$\lVert Q_N\rVert_{L^2(\scrD,\omega)}=1+\Ordo(N^{-\varkappa-1})$, we find that
\be
|c_N|^2=\lVert c_NP_N\rVert_{L^2(\scrD,\omega)}^2
=\lVert Q_N-\Sop_NQ_N\rVert_{L^2(\scrD,\omega)}^2
=1+\Ordo(N^{-\varkappa-1})
\ee
The functions $F_N$, $Q_N$ and $P_N^*$ all have the same leading coefficient,
where we interpret the leading coefficient of $F_N$ as the limit
$\lim_{z\to\infty}(F_N(z)/z^N)$.
However, $F_N$ is chosen to have real and positive leading coefficient, 
and hence $|c_N|=c_N$, and the result follows. 
\end{proof}

\subsection{A Bernstein-Walsh type inequality}
\label{ss:pointwise}
Denote by $A^2_{N,\rho}(\omega)$ the space of all
of holomorphic function on the exterior disk 
$\C\setminus\overline{\scrD}_\rho$ subject to the growth condition
\be
|f(z)|=\Ordo(|z|^N)\qquad\text{as }\;|z|\to+\infty,
\ee
endowed with the Hilbert space structure of $L^2(\scrD\setminus\scrD_\rho,\omega)$.

\begin{lem}\label{lem:BW}
Fix a radius $\rho_1$ with $\rho<\rho_1<1$. Then the exists a 
positive constant $C=C_{\rho,\rho_1,\omega,\scrD}$
such that for $f\in A^2_{N,\rho}(\omega)$ 
we have
\be\label{eq:Berstein-Walsh-exterior}
|f(z)|\le CN\max\{|\varphi(z)|^N,1\},\qquad z\in\C\setminus\scrD_{\rho_1},
\ee
as $N\to+\infty$.
\end{lem}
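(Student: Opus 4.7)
The plan is to combine a direct sub-mean value estimate on the intermediate curve $\partial\scrD_{\rho_1}$ with a bootstrap argument on the outer boundary $\partial\scrD$, using the auxiliary function $g(z):=f(z)\varphi(z)^{-N}$ and the maximum principle. By shrinking $\rho$ slightly if needed we may assume $\varphi$ extends univalently (and without zeros) to a neighborhood of $\C\setminus\scrD_\rho$, so that $g$ is holomorphic there; the growth $|f(z)|=\Ordo(|z|^N)$ together with $\varphi(z)\sim z/\mathrm{cap}(\scrD)$ at infinity then ensures that $g$ is bounded at $\infty$ and extends holomorphically across it. Throughout, $\|f\|$ will denote the $A^2_{N,\rho}(\omega)$-norm, and we use that $\omega\gtrsim 1$ on $\scrD\setminus\scrD_\rho$ since $\log\omega$ is real-analytic across $\partial\scrD$.

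The inner estimate is routine: since $\partial\scrD_{\rho_1}$ is at a fixed positive distance $\delta=\delta(\rho,\rho_1,\scrD)$ from both $\partial\scrD_\rho$ and $\partial\scrD$, the sub-mean value inequality for $|f|^2$ on a ball of radius $\delta$ contained in $\scrD\setminus\scrD_\rho$ yields $\sup_{\partial\scrD_{\rho_1}}|f|\lesssim\|f\|$. The main step --- and the main obstacle --- is to bootstrap $M:=\sup_{\partial\scrD}|f|\lesssim N\|f\|$. For $z_0\in\partial\scrD$, I would apply the sub-mean value inequality to $|f|^2$ on the ball $B=B(z_0,c_0/N)$ for a small constant $c_0>0$, splitting $\int_{B}|f|^2\diffA=\int_{B\cap\scrD}+\int_{B\setminus\scrD}$. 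The inside contribution to $|f(z_0)|^2$ is at most $CN^2\|f\|^2$, since $B\cap\scrD\subset\scrD\setminus\scrD_\rho$ for $N$ large and $|B|\asymp N^{-2}$. For the outside contribution I would invoke the maximum principle for the subharmonic $|g|^2$ on $\C\setminus\overline\scrD$: as $|g|=|f|$ on $\partial\scrD$ and $|g|$ is bounded at $\infty$, we get $|g|\le M$ throughout $\C\setminus\overline\scrD$, so $|f(z)|\le|\varphi(z)|^N M\le \e^{Lc_0}M$ on $B\setminus\scrD$, where $L$ is the Lipschitz constant of $|\varphi|$ near $\partial\scrD$. Because $\partial\scrD$ is real-analytic, $|B\setminus\scrD|\le(\tfrac12+\Ordo(c_0/N))\pi(c_0/N)^2$, and the outside contribution to $|f(z_0)|^2$ is at most $(\tfrac12+\Ordo(c_0/N))\e^{2Lc_0}M^2$. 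For $c_0$ chosen small enough this prefactor is $\le\theta<1$, and the resulting inequality $M^2\le CN^2\|f\|^2+\theta M^2$ closes the bootstrap, yielding $M\lesssim N\|f\|$.

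The conclusion follows at once. For $z\in\C\setminus\overline\scrD$, where $|\varphi(z)|\ge 1$, the maximum principle for $g$ gives $|f(z)|=|g(z)||\varphi(z)|^N\le M|\varphi(z)|^N\lesssim N\|f\||\varphi(z)|^N$. For $z\in\overline\scrD\setminus\scrD_{\rho_1}$, where $|\varphi(z)|\le 1$, the subharmonicity of $|f|^2$ on the annular region with boundary $\partial\scrD_{\rho_1}\cup\partial\scrD$ and the maximum principle yield $|f(z)|\le\max\{\sup_{\partial\scrD}|f|,\sup_{\partial\scrD_{\rho_1}}|f|\}\lesssim N\|f\|$. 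Together these give $|f(z)|\lesssim N\|f\|\max\{|\varphi(z)|^N,1\}$ on $\C\setminus\scrD_{\rho_1}$, as claimed. The delicate point is the balancing in the bootstrap: the sub-mean value ball must be small enough (scale $1/N$) that the Lipschitz growth of $|\varphi|^N$ across $\partial\scrD$ gives only a bounded prefactor, yet large enough that its area captures the $L^2$-mass with the correct $N^2$ amplification.
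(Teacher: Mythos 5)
Your proof is correct, but it takes a genuinely different route from the paper. The paper's argument is a reproducing-kernel estimate: writing $K_N(z,w)$ for the reproducing kernel of the \emph{unweighted} space $A^2_{N,\rho}$, the Cauchy--Schwarz inequality gives $|f(z)|^2\le\lVert f\rVert^2K_N(z,z)$ up to a constant, and the explicit orthonormal basis $c_{n,\rho}\,\varphi^n\varphi'$, $-\infty<n\le N$, lets one compute $K_N(z,z)$ as a Laurent-type series and read off the bound $K_N(z,z)\lesssim N^2$ on $\scrD\setminus\scrD_{\rho_1}$. You instead run a sub-mean-value estimate at scale $1/N$ on $\partial\scrD$ and close the bootstrap $M^2\le CN^2\lVert f\rVert^2+\theta M^2$ by two observations: the area fraction of $B(z_0,c_0/N)\setminus\scrD$ is $\tfrac12+\Ordo(c_0/N)$ (by $C^2$-smoothness of the boundary), and $|\varphi|^N\le\e^{Lc_0}$ on that crescent (by the Lipschitz bound on $|\varphi|$ over a distance $c_0/N$), so the prefactor can be pushed strictly below $1$ by shrinking $c_0$. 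Both proofs finish with the maximum principle applied to $g=f\varphi^{-N}$ on $\C\setminus\overline{\scrD}$, using that $g$ is bounded at infinity by the growth condition $|f(z)|=\Ordo(|z|^N)$ and $\varphi'(\infty)>0$. The paper's route is shorter because the conformal change of variables hands you the orthonormal basis for free; yours is more elementary and more robust (it would survive without an explicit basis), at the cost of the delicate balancing of the bootstrap constant against the boundary curvature and the Lipschitz constant of $|\varphi|$. One minor remark: the lemma as stated implicitly normalizes $\lVert f\rVert\lesssim 1$ (see how it is applied to $v_N$ in the proof of Theorem~\ref{thm:asymp-exp}); your version with $\lVert f\rVert$ made explicit on the right-hand side is the cleaner formulation.
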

\begin{proof}
Denote by $K_N(z,w)=K_{N,w}(z)$ the reproducing kernel for the
unweighted space $A^2_{N,\rho}$ corresponding to $\omega=1$. 
Using the reproducing property of $K_N$ and the Cauchy-Schwarz inequality 
we find that
\begin{multline}\label{eq:pw-control}
|f(z)|^2=|\langle f,K_{N,z}\rangle_{L^2(\scrD\setminus\scrD_\rho)}|^2\le 
\lVert f\rVert_{L^2(\scrD\setminus\scrD_\rho)}^2K_N(z,z)
\\
\lesssim \lVert f\rVert_{L^2(\scrD\setminus\scrD_\rho,\omega)}^2K_N(z,z), 
\qquad z\in\C\setminus\overline{\scrD}_\rho,
\end{multline}
where the implied constant depends on the bound from below of $\omega$ on
the set $\scrD\setminus\scrD_\rho$.
We proceed to estimate the diagonal restriction of the kernel $K_N$.
An orthonormal basis for $A^2_{N,\rho}$
is supplied by 
\be
c_{n,\rho}\varphi^n(z)\varphi'(z),
\ee
where $n$ ranges over the integers in the interval $-\infty<n\le N$,
and where
\be
c_{n,\rho}=\begin{cases}
\frac{\sqrt{2n+2}}{\sqrt{1-\rho^{2n+2}}},& n\ne -1,\\
\frac{1}{\sqrt{|\log \rho^2|}},& n=-1.
\end{cases}
\ee
As a consequence, the diagonal restriction of the kernel $K_N$ is 
given by the formula
\be
K_N(z,z)=\big\{\log\tfrac{1}{\rho^2}\big\}^{-1}|\varphi(z)|^{-2}|\varphi'(z)|^2\,+
\sum_{n=-\infty, n\ne -1}^{N}\frac{2(n+1)}{1-\rho^{2n+2}}|\varphi(z)|^{2n}|\varphi'(z)|^2,
\ee
for $z\in\C\setminus\overline{\scrD}_\rho$. It is easy to see that
for any number $\rho_1$ with $\rho<\rho_1<1$ we have that
\be\label{eq:bound-conf-ker}
\sup_{z\in \scrD\setminus\scrD_{\rho_1}}K_N(z,z)\lesssim N^{2}.
\ee
Indeed, an explicit calculation in the annular variable $w=\varphi(z)$
using a trivial bound of the above sum gives this.
The result now follows by applying the maximum principle to the function 
$f/\varphi^N$ in the domain $\C\setminus \overline\scrD$.
\end{proof}

We proceed to the proof of the main theorem.
\begin{proof}[Proof of Theorem~\ref{thm:asymp-exp}.]
In view of Theorem~\ref{thm:asymp-exp-L2} we have that
\be
P_N(z)=\chi_0\, F_N + N^{-\varkappa-1}v_{N},
\ee
where $v_N$ is confined to a 
ball of fixed positive radius in the space $A^2_{N,\rho}(\scrD,\omega)$. 
In view of Lemma~\ref{lem:BW}
we have
\be
|v_N(z)|\lesssim N\max\{|\varphi(z)|^{N},1\},
\qquad z\in\C\setminus\overline\scrD_{\rho_1}.
\ee 
Then for $z\in\C\setminus\scrD$ we obtain
\begin{multline}\label{eq:ONP-asymp-ext}
P_N(z)=F_N(z)+\Ordo(N^{-\varkappa}|\varphi(z)|^N)
\\=D_N N^{\frac12}\varphi'(z)\varphi^N(z)\e^{V(z)}
\Big(\sum_{j=0}^{\varkappa}N^{-j}B_j(z)
+\Ordo\big(N^{-\varkappa-\frac12}\big)\Big),
\end{multline}
while on the annular set $z\in\scrD\setminus\overline\scrD_{\rho_1}$ we instead have
\begin{multline}\label{eq:ONP-asymp-pw-prel}
P_N(z)=F_N(z)+\Ordo(N^{-\varkappa})
\\=D_N N^{\frac12}\varphi'(z)\varphi^N(z)\e^{V(z)}
\Big(\sum_{j=0}^{\varkappa_1}N^{-j}B_j(z)
+\Ordo\big(N^{-\varkappa-\frac12}\e^{-N\log|\varphi(z)|}\big)\Big).
\end{multline}
Since $|\varphi|=1$ on $\partial\scrD$, 
for any $z\in\scrD$ with $\mathrm{dist}_{\C}(z,\partial\scrD)\le AN^{-1}\log N$ 
we have by Taylor's formula that
\be
N\log\frac{1}{|\varphi(z)|}\le CA\log N,
\ee
for some positive constant $C$ depending on the maximum of $|\varphi'|$
on $\partial\scrD_{\rho_1}$.
As a consequence we obtain
\be\label{eq:estimate-pN-int}
P_N(z)=D_NN^{\frac12}\varphi'(z)\varphi^N(z)\e^{V(z)}
\Big(\sum_{j=0}^{\varkappa}N^{-j}B_j(z)+\Ordo(N^{-\varkappa-\frac12+CA})\Big),
\ee
for all $z\in\C$ with $\mathrm{dist}_{\C}(z,\scrD^c)\le AN^{-1}\log N$. 

The expansions \eqref{eq:ONP-asymp-ext} and \eqref{eq:estimate-pN-int}
are of the desired form, and would give the desired expansion for $P_N$
except that the error terms
are too big. However, since all the coefficients $B_j$ are bounded in the 
indicated domain, we are free to jack up $\varkappa$ to get the 
desired estimate.
For instance, if we apply \eqref{eq:ONP-asymp-ext} and \eqref{eq:estimate-pN-int}
with $\varkappa$ replaced by $\varkappa_1\ge \varkappa+\frac12+CA$
we obtain the error term $\Ordo(N^{-\varkappa-1})$, as claimed.

Finally, the assertion for the
monic polynomials $\pi_N$ follows from the improved versions of
\eqref{eq:ONP-asymp-ext} and \eqref{eq:estimate-pN-int} 
by multiplying $P_N$ with the appropriate positive constant.
\end{proof}

\subsection{Off-spectral asymptotics}\label{ss:off-spectral-pf}
We describe next what changes are 
necessary in order for the asymptotic analysis
of $P_N$ to carry over to the setting of normalized polynomial 
Bergman kernel $\mathrm{k}_{N,w}$ rooted
at an off-spectral point $w$.

\begin{proof}[Proof of Proposition~\ref{prop:off-spectral}]
The whole proof scheme of the previous 
result carries over with minimal changes. That is,
one obtains first a version of Theorem~\ref{thm:asymp-exp-L2} using a slight
modification of the main Lemma
(Lemma~\ref{lem:flow}), see below. After this has been done, 
we can use $\bar\partial$-estimates and standard Hilbert space
techniques to finish the proof as in Subsection~\ref{ss:pointwise}.

The only difference when obtaining Theorem~\ref{thm:asymp-exp-L2} 
is that the estimate between the last 
two lines in \eqref{eq:approx-orth-1} should 
hold whenever $h_N$ is bounded and holomorphic in $\C\setminus\D(0,\rho)$ with 
$h_N(\varphi(w))=0$, rather than with $h_N(\infty)=0$.
This, in turn, boils down to making the following 
technical change in the flow Lemma~\ref{lem:flow}: 
Instead of choosing the functions $f_s$ and $\psi_{s,t}$ such that
\eqref{eq:flow-eq-log} holds for the 
flow density $\flowdens_{s,t}$, we need to choose them so that
\be
\Pi_{s,t}^w(\zeta):=\log\flowdens_{s,t}^w(\zeta)+\log{s}+s^{-1}t = 
\log\frac{\diff \varpi_{\D_\e,\varphi(w)}}{\diffs}+\Ordo(s^{\varkappa+1})
\ee
holds for $\zeta\in\T$ and $0\le t\le\delta_s$. Here, $\flowdens_{s,t}^w$
is the analogously defined flow density. 
If we make the ansatz 
\be
|f_s|^2=s^{-1}|\varrho_w\circ\varphi^{-1}|^2 \e^{g_{s,w}},
\ee
where $\varrho_w$ was defined in \eqref{eq:varrho-def} and where 
$g_{s,w}$ is bounded and harmonic with an asymptotic expansion 
as in \eqref{eq:gs-def},
then the algorithmic procedure used to obtain Lemma~\ref{lem:flow}
applies to give the suitably modified orthogonal foliation flow.
\end{proof}

\section{The distributional asymptotic expansion}\label{s:distributional}
\subsection{The action of the holomorphic wave function on quasipolynomials}
We let $g$ denote a bounded $C^\infty$-smooth 
on $\C\setminus\D(0,\rho)$, 
which is holomorphic on the exterior disk $\D_\e$, and consider the (Hermitian)
action of $p_N=\Lamop_N^{-1}P_N$ on $g$ in the Hilbert 
space structure inherited from
$L^2(\scrD,\omega)$. 
\begin{prop}
\label{prop:distributional-hol}
With $g$ as above,
we have for any given integer $\varkappa\ge 0$ that
\be\label{eq:distr}
\int_{\D\setminus\D(0,\rho)}
g(w)\overline{p_N(w)}|w|^{2N}\Omega(w)\diffA(w)
=D_N^{-1} N^{-\frac12}g(\infty)+\Ordo(N^{-\varkappa-\frac32}),
\ee
as $N\to\infty$.
Here, the implied constant is uniformly bounded provided that the norms
$\lVert (\partial_r^\times)^jg\rVert_{L^\infty({\D}_\e(0,\rho))}$ 
are all uniformly bounded 
for $j\le \varkappa+1$.
\end{prop}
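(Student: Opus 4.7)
The plan is to transfer the integral to $\scrD$ via the isometry \eqref{eq:Lamop-isom} of $\Lamop_N$, reducing it to
\[\int_{\scrD\setminus\scrD_\rho}\Lamop_N[g]\,\overline{P_N}\,\omega\,\diffA,\]
and to split $g=g(\infty)+(g-g(\infty))$ so that the constant part carries the leading term, while the remainder part has $\Lamop_N[g-g(\infty)]$ of growth $\ordo(|z|^N)$ at infinity (since $g-g(\infty)$ vanishes there) and is therefore expected to contribute only to the error.

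Next I treat the constant part $g(\infty)\int\Lamop_N[1]\overline{P_N}\omega\diffA$. Using Theorem~\ref{thm:asymp-exp} with precision $\varkappa$, one may write
\[\Lamop_N[1]=\tfrac{1}{C_N}\pi_N-\sum_{j=1}^{\varkappa}N^{-j}\Lamop_N[X_j]-\varphi'\varphi^N\e^V\cdot\Ordo(N^{-\varkappa-1})\]
pointwise on $\scrD\setminus\scrD_\rho$. Pairing with $\overline{P_N}\omega\diffA$: the $\pi_N$-term gives $1/(C_N\kappa_N)$ exactly by orthogonality and $\lVert P_N\rVert=1$, up to a $\Ordo(N^{-\varkappa-3/2})$ correction from restricting to $\scrD\setminus\scrD_\rho$ rather than $\scrD$ (which is controlled by Theorem~\ref{thm:asymp-exp-L2}, yielding $\lVert P_N\rVert_{L^2(\scrD_\rho,\omega)}=\Ordo(N^{-\varkappa-1})$); each $\Lamop_N[X_j]$ with $j\ge 1$ is a quasipolynomial with $X_j(\infty)=0$ to which Proposition~\ref{prop:flow-conseq} applies, giving $\Ordo(N^{-\varkappa-3/2})$ thanks to the isometric norm bound $\lVert\Lamop_N[X_j]\rVert_{L^2(\omega)}=\Ordo(N^{-1/2})$; and the pointwise error contributes the same order via Cauchy-Schwarz together with $\lVert\varphi'\varphi^N\e^V\rVert_{L^2(\omega)}^2=\int r_N\Omega\diffA=\Ordo(N^{-1})$. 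The asymptotic $\kappa_N=N^{1/2}C_N^{-1}D_N(1+\Ordo(N^{-1}))$ (the $N^{1/2}$ being the Carleman factor implicit in the normalization of $F_N$) then identifies $1/(C_N\kappa_N)=N^{-1/2}D_N^{-1}+\Ordo(\cdot)$, producing the leading $g(\infty)N^{-1/2}D_N^{-1}$.

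For the remainder, if $g-g(\infty)$ were holomorphic across $\T$, Proposition~\ref{prop:flow-conseq} would directly give $\Ordo(N^{-\varkappa-3/2})$. Since $g$ is only $C^\infty$ on the annulus, I would instead work on the disk side, computing
\[\int_{\D\setminus\D(0,\rho)}\chi_1\,(g-g(\infty))\,\overline{f_N}\,r_N\Omega\diffA\]
by the anti-holomorphic change of variables $w=\e^{-s+\imag t}$ as in \eqref{eq:norm-approx-qk}--\eqref{eq:hardy-comput}, followed by Proposition~\ref{prop:PI-Laplace} with $\lambda=2(N+1)$. Each coefficient in the resulting Laplace expansion in $1/N$ is a Fourier-type integral on $\T$ of a radial derivative of $(g-g(\infty))\overline{f_N}\Omega$; using the operator identity \eqref{eq:Top-consequence} and the Riemann-Hilbert hierarchy \eqref{eq:T-ker} satisfied by the $X_l$'s (whose whole purpose is to annihilate such integrals for the Laurent modes $w^{-k}$, $k\ge 1$, cf.\ \eqref{eq:hardy3}), the mean-value identity $\int_\T(g-g(\infty))\diffs=0$ propagates through each order. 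The uniform $L^\infty$-bounds on $(\partial_r^\times)^jg$ for $j\le\varkappa+1$ assumed in the hypothesis provide the termwise control needed to close the Fourier summation and yield the uniform $\Ordo(N^{-\varkappa-3/2})$ remainder.

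The main obstacle is precisely this third step: verifying that the Riemann-Hilbert cancellations, originally derived against the monomials $w^{-k}$, extend to a general smooth $g$ holomorphic on $\D_\e$. One either argues by Fourier expansion of $g|_\T$ on the unit circle with uniform derivative control from the hypothesis, or re-examines the derivation of Section~\ref{s:Toeplitz} with a generic smooth test function. Either way, the $\varkappa+1$ radial derivatives appearing in the hypothesis are exactly what is needed to absorb the $(\varkappa+1)$-th order Laplace remainder and produce the target $\Ordo(N^{-\varkappa-3/2})$ error.
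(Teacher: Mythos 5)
Your decomposition $g = g(\infty) + (g-g(\infty))$ inverts the order of the paper's proof (which first treats $g(\infty)=0$ and then bootstraps), and the overall shape of your argument is right. But you have correctly identified the crux and then left it open: extending the Riemann--Hilbert cancellations \eqref{eq:hardy3}, derived against the individual monomials $w^{-k}$, to a general smooth $g$ holomorphic on $\D_\e$ is, as you say, ``the main obstacle,'' and offering the two alternatives ``argue by Fourier expansion'' or ``re-examine the derivation of Section~\ref{s:Toeplitz}'' is not the same as carrying one out. Moreover, the Fourier-mode route does not close under the stated hypothesis: summing the Laplace remainders of the modes $w^{-m}$ requires $\sum_{m\ge 1} m^{\varkappa+1}|\hat g(-m)| < \infty$, which is strictly stronger than $\lVert(\partial_r^\times)^j g\rVert_{L^\infty} < \infty$ for $j\le\varkappa+1$; to stay within the hypothesis you must apply Proposition~\ref{prop:PI-Laplace} to $g$ in aggregate, and then you are left with integrals $\int_\T (\partial_r^\times)^{j-l}g\cdot(\cdots)\,\diff t$ in which the radial derivatives sit on $g$ and there is no visible reason for them to vanish.

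The missing idea is the short but decisive observation \eqref{eq:decomp-dbar-r-t}: since $g$ is holomorphic on $\D_\e$ and smooth up to $\T$, writing $-\partial_r^\times = -2\bar\partial_z^\times + \imag\partial_t$ and using $\bar\partial g = 0$ gives $-\partial_r^\times g = \imag\partial_t g$ to infinite order on $\T$. This lets one trade every radial derivative hitting $g$ for a tangential one and then integrate by parts in $t$ to transfer all derivatives onto the $\overline{X_k\Omega}$ factor, after which the integrand collapses to $g(\e^{\imag t})\sum_l \overline{\Top^{k-l}X_l(\e^{\imag t})}$. At that point the hierarchy membership $\sum_l\Top^{k-l}X_l\in H^2$ from Section~\ref{s:Toeplitz}, together with $\hat g(n)=0$ for all $n\ge 0$ (holomorphy on $\D_\e$ plus $g(\infty)=0$), annihilates every main term, and the hypothesis on $(\partial_r^\times)^{\varkappa+1}g$ controls the Laplace remainder uniformly. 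That is precisely \eqref{eq:decomp-dbar-r-t}--\eqref{eq:hol-action} in the paper, and without it your remainder estimate does not close. A secondary caveat on your constant part: you invoke the pointwise expansion of Theorem~\ref{thm:asymp-exp} as a bound of the form $\varphi'\varphi^N\e^V\cdot\Ordo(N^{-\varkappa-1})$ uniformly on $\scrD\setminus\scrD_\rho$, but the theorem asserts this only in the shrinking strip $\mathrm{dist}(z,\scrD^c)\le AN^{-1}\log N$; in the fixed annulus only the weaker estimate \eqref{eq:ONP-asymp-pw-prel} is available, and one must jack up $\varkappa$ to recover the stated error. The paper avoids this entirely by the simpler route $1=\lVert p_N\rVert^2+\Ordo(N^{-\varkappa-1})$, $p_N(\infty)=D_NN^{1/2}$, and an application of the $g(\infty)=0$ case to $p_N-p_N(\infty)$, which is where the uniformity clause in the statement earns its keep.
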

\begin{proof}
Assume first that $g(\infty)=0$.
In view Theorem~\ref{thm:asymp-exp} and
Proposition~\ref{prop:PI-Laplace} we have
\begin{multline}
\frac{1}{D_N N^{\frac12}}
\int_{\D\setminus \D(0,\rho)}g(w)\overline{p_N(z)}|w|^{2N}\Omega(w)\diffA(w)
\\= \sum_{j+k\le \varkappa}\frac{1}{N^{j+k+1}2^{j}}
\int_{\T}\partial_s^j\big(g(\e^{-s+\imag t})\bar X_k(\e^{-s+\imag t})
\e^{-2s}\Omega(\e^{-s+\imag t})\big)\big\vert_{s=0}
\frac{\diff t}{2\pi}+\Ordo(N^{-\varkappa-2})\\
= \sum_{\substack{j+k\le \varkappa\\ 0\le l\le j}}
\frac{A(j,k,l)}{N^{j+k+1}}\int_0^{2\pi}
(\partial_r^\times)^{j-l}g(r\e^{\imag t}) 
(\partial_r^\times+2\Iop)^l(\overline{X_k}\Omega)
(r\e^{\imag t})\big\vert_{r=1}\frac{\diff t}{2\pi}
+\Ordo(N^{-\varkappa-2}),
\end{multline}
where the indices $j,k$ are non-negative integers,
$A(j,k,l)$ denotes the constant
\be
A(j,k,l)=(-1)^j2^{-j}\binom{j}{l},
\ee
and where we apply the Leibniz rule in the last step.
We next write 
\be
\label{eq:decomp-dbar-r-t}
-\partial_r^\times =-2\bar\partial_z^\times +\imag\partial_t
\ee
where $z=r\e^{\imag t}$.
Hence, for bounded $g$ in $C^\infty(\D_\e(0,\rho))$ with 
$\bar\partial g=0$ on $\D_\e$ and $g(\infty)=0$, we find that
$-\partial_r^\times g=\imag \partial_tg$ 
holds to infinite order on $\T$. As a consequence, we find that
\begin{multline}\label{eq:hol-action}
\frac{1}{D_N N^{\frac12}}\int_{\D\setminus \D(0,\rho)}g(w)
\overline{p_N(w)}|w|^{2N}\Omega(w)\diffA(w)
\\
=\sum_{j,k\le \varkappa}\frac{(-1)^j}{N^{j+k+1}}\int_0^{2\pi}g(r\e^{\imag t})
\overline{(\partial_z^\times -\Iop)^{j}
(X_k\Omega)(r\e^{\imag t})}\Big\vert_{r=1}
\frac{\diff t}{2\pi}+\Ordo(N^{-\varkappa-2})
\\=\sum_{k=0}^\varkappa N^{-k-1}\int_0^{2\pi}g(\e^{\imag t})
\sum_{l=0}^k\overline{\Top^{k-l} X_{l}(\e^{\imag t})}
\frac{\diff t}{2\pi}+\Ordo(N^{-\varkappa-2})
=\Ordo(N^{-\varkappa-2}),
\end{multline}
where where recall that $\partial_z^\times=z\partial_z$ for 
$z=r\e^{\imag t}$ and that $\Omega\vert_{\T}=1$. Here, the last equality
in \eqref{eq:hol-action} holds 
since $\sum_{l=0}^p\Top^{p-l}X_l\in H^{2}$
by the computations in Section~\ref{s:Toeplitz}.

If $g(\infty)\ne 0$, we instead obtain
\be
\int_{\D\setminus \D(0,\rho)}g(w)\overline{p_N(z)}|w|^{2N}
\Omega(w)\diffA(w)= c_Ng(\infty)+\Ordo(N^{-\varkappa-\frac32})
\ee 
for some constant $c_N$, which we proceed to compute. Since $p_N$
is approximately orthogonal to holomorphic functions vanishing at infinity
(by Theorem~\ref{thm:asymp-exp-L2})
and since by construction $p_N(\infty)= D_NN^{\frac12}$,
we find that 
\begin{multline}
1=
\lVert p_N\rVert_{L^2(\D\setminus \D(0,\rho),r_N\Omega)}^2
+\Ordo(N^{-\varkappa-1})
\\=D_N N^{\frac12}
\int_{\D\setminus\D(0,\rho)}\overline{p_N(w)}|w|^{2N}\Omega(w)\diffA(w)
+\Ordo(N^{-\varkappa-1})
\\
= c_ND_N N^{\frac12}+\Ordo(N^{-\varkappa-1})
\end{multline}
which gives $c_N=D_N^{-1}N^{-\frac12}+\Ordo(N^{-\varkappa-1})$, 
as claimed.
\end{proof}

\subsection{The wave function as a distribution}
Recall that 
$G$ is a smooth bounded test function split
according to \eqref{eq:test-fcn-split}, and let $g=G\circ\varphi^{-1}$
have a corresponding decomposition $g=g_0+g_++g_{-}$. The function 
$g$ is automatically 
defined on some exterior disk $\C\setminus\D(0,\rho_1)$, where $0<\rho_1<1$.
\begin{proof}[Proof of Theorem~\ref{thm:distributional}]
We apply Proposition~\ref{prop:distributional-hol} to
$p_Ng_+$, and the conjugated version of the same proposition 
to $\overline{p_N}g_{-}$. We recall that $X_0=1$ and that for $k\ge 1$ we have
$\sum_{l=0}^k\Top^{k-l}X_l\in H^2_0$ by Theorem~\ref{thm:coeff}. 
By Theorem~\ref{thm:asymp-exp}, the mass of the orthogonal polynomial $P_N$
is concentrated near $\partial\scrD$ in the sense that
for any fixed $0<\rho_2<1$ we have
\be
\int_{\scrD_{\rho_2}}|P_N|^2\omega\diffA=\Ordo(N^{-\varkappa-1}) 
\ee
as $N\to+\infty$.
Hence, we have that
\begin{multline}
\int_{\scrD}G_{+}(z)|P_N(z)|^2\omega(z)\diffA(z)
\\=
\int_{\D\setminus\D(0,\rho_2)}f_N(w)g_{+}(w)
\overline{f_N(w)}|w|^{2N}\Omega(w)\diffA(w)+\Ordo(N^{-\varkappa-1})
\\=(N^{\frac12}D_N)^{-1} f_N(\infty)g_+(\infty)+\Ordo(N^{-\varkappa-1})
=G_+(\infty)+\Ordo(N^{-\varkappa-1}),
\end{multline}
where $\rho_2$ with $\rho_1<\rho_2<1$ is close enough to $1$, 
and similarly for $G_{-}$.
The remaining conclusion follows by applying 
Proposition~\ref{prop:PI-Laplace} and the Leibniz rule
to the function $g_{0}(\e^{-s+\imag t})
|f_N(\e^{-s+\imag t})|^2\Omega(\e^{-s+\imag t})$,
and changing the order of summation 
(taking the order $\nu$ of the radial differential operator which
hits $g_0$ as the basic parameter).
\end{proof}


\begin{thebibliography}{99}

\bibitem{ahm1} Ameur, Y., Hedenmalm, H., Makarov, N., \emph{Berezin 
transform in polynomial Bergman spaces}. Comm. Pure Appl. Math. 
\textbf{63} (2010), no. 12, 1533-1584.

\bibitem{ahm2} Ameur, Y., Hedenmalm, H., Makarov, N., \emph{Fluctuations
of random normal matrices}. Duke Math. J. \textbf{159} (2011), 31-81.

\bibitem{ahm3} Ameur, Y., Hedenmalm, H., Makarov, N., \emph{Random
normal matrices and Ward identities}. Ann. Probab. {\bf 43} (2015), 1157-1201.

\bibitem{Beckermann} Beckermann, F., Stylianopoulos, N., 
\emph{Bergman orthogonal polynomials and the Grunsky
matrix}. Constr. Approx. \textbf{47} (2018), no. 2, 211-235.

\bibitem{BBS}
Berman, R., Berndtsson, B., Sj\"ostrand, J.,
\emph{A direct approach to Bergman kernel asymptotics for positive line 
bundles}. Ark. Mat. \textbf{46} (2008), no. 2, 197-217. 

\bibitem{Carleman}
Carleman, T., \emph{\"Uber die Approximation analytischer
Funktionen durch lineare Aggregate von vorgegebenen Potenzen}. 
Ark. Mat. Astron. Fys. \textbf{17} (1923), 1-30. 

\bibitem{Carl2} Carleman, T., \emph{\'Edition compl\`ete des articles de 
Torsten Carleman}. Edited by \AA{}. Pleijel, in collaboration with L.
Lithner and J. Odhnoff.  Published by the Mittag-Leffler mathematical 
institute. Litos Repro\-tryck, Malm\"o, 1960.

\bibitem{Charles} Charles, L.,
\emph{Berezin-Toeplitz operators, a semi-classical approach.} 
Comm. Math. Phys. \textbf{239} (2003), no. 1-2, 1–28.

\bibitem{Charles1} Charles, L., \emph{Analytic Berezin-Toeplitz operators}. 
arXiv: 1912.06819 (2019).

\bibitem{DHS} Deleporte, A., Hitrik, M., Sj{\"o}strand, J., 
\emph{A direct approach to the analytic Bergman
projection}, arXiv:2004.14606 (2020).

\bibitem{Deift-varying} Deift, P., Kriecherbauer, T., McLaughlin, 
K. T.-R., Venakides, S., Zhou, X., 
\emph{Uniform asymptotics for polynomials orthogonal 
with respect to varying exponential weights and applications 
to universality questions in random matrix theory.} 
Comm. Pure Appl. Math. \textbf{52} (1999), no. 11, 1335-1425.

\bibitem{Deift-fixed} Deift, P., Kriecherbauer, T., 
McLaughlin, K. T-R, Venakides, S., Zhou, X., 
\emph{Strong asymptotics of orthogonal polynomials with 
respect to exponential weights.} 
Comm. Pure Appl. Math. \textbf{52} (1999), no. 12, 1491-1552.

\bibitem{Deift-PNAS} 
Deift, P., Venakides, S., Zhou, X.,
\emph{An extension of the steepest descent method for
Riemann-Hilbert problems: the small dispersion 
limit of the Korteweg-de Vries (KdV) equation.}
 Proc. Natl. Acad. Sci. USA \textbf{95} (1998), no. 2, 450-454.

\bibitem{DeiftZhou} Deift, P., Zhou, X., 
\emph{A steepest descent method for oscillatory 
Riemann-Hilbert problems. Asymptotics for the MKdV equation.} 
Ann. of Math. (2) {\bf 137} 
(1993), no. 2, 295-368.

\bibitem{DMD2} Dragnev, P., Mi\~na-Diaz, E., \emph{Asymptotic
behavior and zero distribution of Carleman orthogonal polynomials}.
J. Approx. Theory \textbf{162} (2010) no 11., 1982-2003.

\bibitem{Englis} Engli{\v{s}}, M., \emph{The asymptotics of a 
Laplace integral on a K{\"a}hler manifold.} 
J. Reine Angew. Math. \textbf{528} (2000), 1–39.
\bibitem{Its1} Fokas, A. S., Its, A. R., Kitaev, A. V., 
\emph{Discrete Painlev{\'e} equations and their 
appearance in quantum gravity.} Comm. Math. Phys. {\bf 142} 
(1991), no. 2, 313-344.

\bibitem{Its2} Fokas, A. S., Its, A. R., Kitaev, A. V., 
\emph{The isomonodromy approach to matrix 
models in 2D quantum gravity.} Comm. Math. Phys. {\bf 147} (1992), 
no. 2, 395-430.

\bibitem{Archipelago} Gustafsson, B., Putinar, M., Saff, E. B.,
Stylianopoulos, N., \emph{Bergman polynomials on an archipelago: 
  Estimates, zeros and shape reconstruction}. Adv. Math.
\textbf{222} (2009), no. 4, 1405-1460.

\bibitem{HM} 
Hedenmalm, H., Makarov, N., \emph{Coulomb gas ensembles 
and Laplacian growth}. Proc. Lond. Math. Soc. (3) {\bf 106} (2013), 859-907.

\bibitem{HW-ONP} Hedenmalm, H., Wennman, A., 
\emph{Planar orthogonal polynomials and boundary universality 
in the random normal matrix model}, arXiv:1710.06493 (2017).

\bibitem{HW-OFF} Hedenmalm, H., Wennman, A., 
\emph{Off-spectral analysis of Bergman kernels}, 
Comm. Math. Phys. \textbf{373} (2020), no. 3, 1049-1083.

\bibitem{Hezari} Hezari, H., Xu, H., 
\emph{On a property of Bergman kernels when the Kähler potential is analytic}.
arXiv:1912.11478 (2019).

\bibitem{ItsTakhtajan} Its, A., Takhtajan, L., \emph{Normal matrix models, 
$\bar\partial$-problem, and orthogonal polynomials in the complex plane}, 
arXiv:0708.3867 (2007). 

\bibitem{KleinMcLaughlin} Klein, C., McLaughlin, K. D.,
\emph{Spectral approach to D-bar problems.}
Comm. Pure Appl. Math. \textbf{70} (2017), no. 6, 1052-1083.

\bibitem{Korov}
Korovkin, P. P.,
\emph{The asymptotic representation of polynomials orthogonal
over a region} (Russian). Doklady Akad. Nauk SSSR (N.S.) 58, (1947),
1883-1885.  

\bibitem{Levin} Levin, A.L., Saff, E. B.  Stylianopoulos, N. S., 
\emph{Zero distribution of Bergman orthogonal polynomials 
for certain planar domains}, Constr. Approx. \textbf{19} (2003), 411-435.

 \bibitem{Loi}
 Loi, A., \emph{The Tian-Yau-Zelditch asymptotic expansion 
 for real analytic K{\"a}hler metrics}. Int. J. 
 Geom. Methods Mod. Phys. \textbf{1} (2004), 
 no. 3, 253–263.

\bibitem{Lubinsky} Lubinsky, D. S., 
\emph{Universality type limits for Bergman orthogonal polynomials.}
Comput. Methods Funct. Theory \textbf{10} (2010) 135-154.

\bibitem{MD1} Mi\~na-Diaz, E., \emph{An asymptotic integral representation 
for Carleman orthogonal polynomials}. Int. Math. Res. Not. IMRN 
no. \textbf{16} (2008), no 16. Art. ID rnn065.

\bibitem{RSN} Rouby, O., Sj\"ostrand, J., and Vu Ngoc, S. V., 
\emph{Analytic Bergman operators in the semiclassical limit}. 
Duke Math. J., to appear. arXiv:1808.00199.

\bibitem{Saff} Saff, E. B., Stahl, H., Stylianopoulos, N., 
Totik, V., \emph{Orthogonal polynomials for area-type measures
and image recovery}. SIAM J. Math. Anal. \textbf{47} (2015) 2442-2463.

\bibitem{SaffTotik}
Saff, E. B., Totik, V., \emph{Logarithmic potentials 
with external fields}, Grundlehren der mathematischen Wissenschaften, 
Springer Verlag, New York-Heidelberg-Berlin (1997). 

\bibitem{Seeley} Seeley, R. T., \emph{Extension of $C^\infty$ functions defined in a 
half space}, Proc. Amer. Math. Soc. \textbf{15} 625–626 (1964).

\bibitem{simonbook1} Simon, B.,
\emph{Orthogonal polynomials on the unit circle. Part 1. 
Classical theory}. American Mathematical Society 
Colloquium Publications, 54, Part 1. American Mathematical Society, 
Providence, RI, 2005.

\bibitem{simonbook2} Simon, B.,
\emph{Orthogonal polynomials on the unit circle. Part 2. Spectral theory}. 
American Mathematical Society Colloquium Publications, 
54, Part 2. American Mathematical Society, Providence, RI, 2005.

\bibitem{StahlTotik} Stahl, H. and Totik, V., \emph{General orthogonal
polynomials}, Encyclopedia of Mathematics and its Applications {\bf 43}, 
Cambridge University Press, 1992.

\bibitem{Suet} Suetin, P. K., \emph{Polynomials orthogonal over a region and 
Bieberbach polynomials}. Translated from the Russian by R. P. Boas. 
Proceedings of the Steklov Institute of Mathematics, No. \textbf{100} (1971). 
American Mathematical Society, Providence, R.I., 1974. 


\bibitem{Styl1} Stylianopoulos, N., \emph{Strong asymptotics for Bergman 
orthogonal polynomials over domains with corners and applications.} 
Constr. Approx. \textbf{38} (2013), no. 1, 59-100.


\bibitem{szego} Szeg{\H{o}}, G.,  \emph{\"Uber orthogonale Polynome 
die zu einer gegebenen Kurve der komplexen Ebene geh\"oren}. 
Math. Z. \textbf{9} (1921), 218-270. 

\bibitem{Szeg-book} Szeg{\H{o}}, G., \emph{Orthogonal polynomials}.
Fourth edition. American Mathematical Society, Colloquium Publications, 
Vol. XXIII. American Mathematical Society, Providence, R.I., 1975. 

\bibitem{Whitney} Whitney, H., \emph{
Analytic extensions of differentiable functions defined in closed sets}. Trans.
Amer. Math. Soc. \textbf{36} (1934), 63–89.

\bibitem{WZ}
Wiegmann, P., Zabrodin, A., \emph{Large $N$-expansion 
for the 2D Dyson Gas}.
J. Phys. A {\bf 39} (2006), no. 28, 8933-8963. 

\bibitem{Xu} Xu, H., \emph{A closed formula for the 
asymptotic expansion of the Bergman kernel}. 
Comm. Math. Phys. \textbf{314} (2012), no. 3, 555–585.
\end{thebibliography}
\end{document}